\providecommand{\Dashv}{\mathrel{\text{\reflectbox{$\vDash$}}}}
\begin{document}
\title{Universal covers of commutative finite Morley rank groups
}

\date{Wed  7 Mar 23:41:12 CET 2018
}

\providecommand{\acl}{\operatorname{acl}}
\providecommand{\dcl}{\operatorname{dcl}}
\providecommand{\tp}{\operatorname{tp}}
\providecommand{\stp}{\operatorname{stp}}
\providecommand{\eq}{\operatorname{eq}}
\providecommand{\fin}{\operatorname{fin}}
\providecommand{\grploc}{\operatorname{grploc}}
\providecommand{\End}{\operatorname{End}}
\providecommand{\Hom}{\operatorname{Hom}}
\providecommand{\Tor}{\operatorname{Tor}}
\providecommand{\Gal}{\operatorname{Gal}}
\providecommand{\im}{\operatorname{im}}
\providecommand{\pr}{\operatorname{pr}}
\providecommand{\deg}{\operatorname{deg}}
\providecommand{\pureHull}{\operatorname{pureHull}}
\providecommand{\Ext}{\operatorname{Ext}}
\providecommand{\alg}{\operatorname{alg}}
\providecommand{\G}{\mathbb{G}}

\newcommand{\defn}{\underline}

\providecommand{\acleq}{\acl^{\eq}}
\providecommand{\dcleq}{\dcl^{\eq}}

\providecommand{\restricted}{\!\!\restriction}

\providecommand{\M}{\mathcal{M}}
\providecommand{\monst}{\mathfrak{C}}

\providecommand{\mapsonto}{\twoheadrightarrow}

\providecommand{\Qbar}{\bar{\Q}}

\renewcommand{\P}{\powerset}
\renewcommand{\Cup}{\bigcup}

\newcommand{\er}{\mathcal{O}}
\newcommand{\ek}{\er^0}

\providecommand{\pairing}[2]{ \left<{#1,#2}\right> }

\providecommand{\thmref}[1]{Theorem~\ref{#1}}
\providecommand{\lemref}[1]{Lemma~\ref{#1}}
\providecommand{\corref}[1]{Corollary~\ref{#1}}
\providecommand{\defref}[1]{Definition~\ref{#1}}
\providecommand{\propref}[1]{Proposition~\ref{#1}}
\providecommand{\factref}[1]{Fact~\ref{#1}}
\providecommand{\remref}[1]{Remark~\ref{#1}}
\providecommand{\secref}[1]{Section~\ref{#1}}
\providecommand{\ssecref}[1]{Subsection~\ref{#1}}
\providecommand{\axref}[1]{(A\ref{#1})}
\providecommand{\dedref}[1]{(D\ref{#1})}

\theoremstyle{plain}
\newtheorem{thm}{Theorem}[section]
\newtheorem{theorem}[thm]{Theorem}
\newtheorem{lemma}[thm]{Lemma}
\newtheorem{proposition}[thm]{Proposition}
\newtheorem{conjecture}[thm]{Conjecture}
\newtheorem{fact}[thm]{Fact}
\newtheorem*{fact*}{Fact}
\newtheorem{corollary}[thm]{Corollary}
\newtheorem{claim}[thm]{Claim}
\newtheorem*{claim*}{Claim}

\theoremstyle{definition}
\newtheorem{definition}[thm]{Definition}
\newtheorem*{definition*}{Definition}
\newtheorem{definitions}[thm]{Definitions}
\newtheorem*{definitions*}{Definitions}
\newtheorem{notation}[thm]{Notation}
\newtheorem*{notation*}{Notation}
\newtheorem{axioms}[thm]{Axioms}
\newtheorem{assumption}[thm]{Assumption}

\theoremstyle{remark}
\newtheorem{remark}[thm]{Remark}
\newtheorem*{remark*}{Remark}
\newtheorem{example}[thm]{Example}
\newtheorem*{example*}{Example}
\newtheorem{remarks}[thm]{Remarks}
\newtheorem*{remarks*}{Remarks}
\newtheorem{examples}[thm]{Examples}
\newtheorem*{examples*}{Examples}
\newtheorem{note}[thm]{Note}
\newtheorem*{note*}{Note}
\newtheorem{question}[thm]{Question}
\newtheorem*{question*}{Question}

\author{Martin Bays, Bradd Hart, and Anand Pillay}

\maketitle

\begin{abstract}
  We give an algebraic description of the structure of the analytic universal
  cover of a complex abelian variety which suffices to determine the
  structure up to isomorphism. More generally, we classify the models of
  theories of ``universal covers'' of rigid divisible commutative finite
  Morley rank groups.
\end{abstract}

\section{Introduction}
\label{sec:intro}
\subsection{Characterising universal covers of abelian varieties}
\label{ssec:intro}
Let $\G = \G_m^n$ be a complex algebraic torus, or let $\G$ be a complex
abelian variety. Considering $\G(\C)$ as a complex Lie group, with
$L\G=T_0(\G(\C))$ its (abelian) Lie algebra, the exponential map provides a
surjective analytic homomorphism
    \[ \exp : L\G \twoheadrightarrow  \G(\C) .\]

Let $\er := \{ \eta \in \operatorname{End}(L\G) \;|\; \eta(\ker\exp) \subseteq  \ker\exp \} \isom
\End(\G)$ be the ring of $\C$-linear endomorphisms of $L\G$ which induce
endomorphisms of $\G(\C)$; these are precisely the algebraic endomorphisms of
$\G$. Consider $L\G$ as an $\er$-module.


In this paper, we use model theoretic techniques and Kummer theory to give a
purely algebraic characterisation of the algebraic consequences of this
analytic picture.

\newcommand{\kerQ}{\spanofover{\ker(\exp)}{\Q}}
At first sight, $\exp$ relates $L\G$ to $\G(\C)$ in a rather particular way.
For example, if $a \in \G(\C)$ and $\exp(\alpha)=a$, then $\exp(\alpha/n)$
converges topologically to $0 \in \G(\C)$ - something which certainly needn't
hold for an arbitrary $\er$-module homomorphism. We will show however that
if we forget the topology and the analytic structure, leaving only the field
structure on $\C$ and the $\er$-module structure on $L\G$, and so work up to
field automorphisms of $\C$ and up to $\er$-module automorphisms of $L\G$,
then $\exp$ is distinguished from other $\er$-module homomorphisms only by its
interaction with the torsion subgroup $\G[\infty]$ of $\G$. More precisely, it
is described by its restriction $\exp|_{\kerQ} : \kerQ \twoheadrightarrow  \G[\infty]$ to the
divisible subgroup generated by $\ker(\exp)$: once this restriction is chosen,
there is a unique way, up to automorphisms, to extend it to $L\G$.

\begin{theorem} \label{thm:catAbelian}
    Suppose $\G$ and the action of each $\eta\in\er$ are defined over a number
    field $k_0 \leq  \C$.

    Suppose $\rho,\rho' : L\G \twoheadrightarrow  \G(\C)$ are surjective $\er$-module homomorphisms,
    $\ker\rho'=\ker\rho$, and
    $\rho'\restricted_{\spanofover{\ker\rho'}{\Q}} =
    \rho\restricted_{\spanofover{\ker\rho}{\Q}}$.

    Then there exists an $\er$-module automorphism
    $\sigma\in\Aut_\er(L\G/\ker\rho)$ and a field automorphism
    $\tau\in\Aut(\C/k_0)$ of $\C$ fixing $k_0$ such that
    the following diagram commutes, where $\tau : \G(\C) \rightarrow  \G(\C)$ is the
    abstract group automorphism induced by $\tau$.
    \[ \xymatrix{
    L\G \ar[d]^{\rho_1} \ar[r]^\sigma & L\G \ar[d]^{\rho_2} \\
    \G(\C) \ar[r]^\tau                & \G(\C) \\
    } \]
\end{theorem}

We will define an \defnstyle{$\widehat{L}$-isomorphism} to be such a pair
$(\sigma,\tau)$ of an $\er$-module isomorphism and a field isomorphism which
agree on $\G$.
So \thmref{thm:catAbelian} yields a characterisation of
$\exp : L\G \twoheadrightarrow  \G(\C)$: it is, up to $\widehat{L}$-isomorphism, the unique surjective
$\er$-homomorphism with its kernel and its restriction to the divisible
subgroup generated by that kernel.

We require here that $k_0$ is a number field in order to have Kummer theory
available. We have a corresponding result in the case that $\G$ is a split
semiabelian variety defined over a number field, but general semiabelian
varieties are problematic due to failure of Kummer theory.

We prove \thmref{thm:catAbelian} by classifying the models of the first order
theory of $\exp$ in an appropriate language $\widehat{L}$.
Our proof can be split into three stages:

\begin{enumerate}[(i)]\item Kummer theory for abelian varieties
    explains the behaviour for finite extensions of $k_0$, and
    suffices to show uniqueness of the restriction of $\exp$ to $\exp^{-1}(\G(\Qbar))$;
\item a function-field analogue of this Kummer theory allows us to extend the
    uniqueness to $\G(F)$ for $F$ an algebraically closed field of cardinality
    $\leq \aleph_1$;
\item we extend to arbitrary cardinals (in particular the continuum, which
    without assuming the continuum hypothesis is not covered by (ii)) using
    arguments involving independent systems, based on techniques involved in
    Shelah's Main Gap theorem.
\end{enumerate}

In \cite{BGHKg}, it was found that the geometric Kummer theory of (ii)
actually follows from a general model-theoretic principle, Zilber's
Indecomposability Theorem, and hence holds in the generality of rigid (see
below) commutative divisible finite Morley rank groups.

This also turns out to be a natural level of generality for (iii), and it is
in this context that we will actually work for most of this paper. We
obtain an analogue of \thmref{thm:catAbelian} in this
generality, \thmref{thm:classification} below - although since there is no
analogue of (i) in such generality we get a correspondingly weaker result.

This does allow us to remove the restrictions in \thmref{thm:catAbelian} and
still get a uniqueness result: if $\G$ is an abelian variety over a field
$k_0\leq \C$, then the exponential map $\exp : L\G \twoheadrightarrow  \G(\C)$ is, up to
$\widehat{L}$-isomorphism fixing $\exp^{-1}(\G(k_0^{\alg}))$, the unique
surjective $\End(\G)$-homomorphism with kernel $\ker\exp$ which extends
$\exp\restricted_{\exp^{-1}(\G(k_0^{\alg}))}$.
We obtain an analogous result for semiabelian varieties as part of
\ssecref{ssec:meroGrp}.

We also obtain similar results for complex tori which are not abelian
varieties, and for semiabelian varieties in positive characteristic,
generalising \cite{BZCovers}.

\subsection{Profinite covers and an outline of the paper}
For $\G = \G(\C)$ as above, or more generally for $\G$ a commutative divisible
finite Morley rank group, we associate a canonical structure $\widehat{\G}$ which we
call the ``profinite universal cover'' of $\G$, defined as the inverse limit of
copies of $\G$ with respect to the inverse system of multiplication-by-$n$
maps, $\widehat{\G} := \liminv [n] : \G \twoheadrightarrow  \G$.

In the case of $\G$ a complex semiabelian variety, this is the same
construction that appears in the definition of the \'etale fundamental group -
every finite \'etale cover of $\G$ is dominated by some $[n]$, so taking the
inverse limit with respect to all $[n]$ amounts to taking the inverse limit
with respect to all finite \'etale covers. So $\widehat{\G}$ can be identified as the
``\'etale universal cover'' of $\G$.

In general, we can see $\widehat{\G}$ as a purely algebraic substitute for an analytic
universal cover of $\G$. We will see below in Remark~\ref{rem:LGelem} one
justification for this: in an appropriate language $\widehat{L}$, if $\G$ is a Lie
group, then the Lie exponential map is an elementary submodel of the profinite
universal cover $\widehat{\G}$.

The results described in the previous subsection result from classifying the
models of the first-order theory of $\widehat{\G}$.

In \secref{sec:hat} we define the structure we wish to consider on $\widehat{\G}$,
axiomatise its first-order theory $\widehat{T}$, prove quantifier elimination, and
examine it in terms of stability theory. In \secref{sec:classification}, we
give a classification of the models of $\widehat{T}$. In \secref{sec:abelian}, we
return essentially to the context of \ssecref{ssec:intro}, specialising the
abstract model theory of earlier sections to the case of algebraic groups.
Here we also use Kummer theory to strengthen the classification (peeking
inside the prime model); the necessary Kummer theory is presented in
Appendix~\ref{sec:kummer}. Finally, in \secref{sec:examples}, we present some
further natural examples of models of $\widehat{T}$ for various $\G$, to which our
classification theorem applies.

\subsection{The literature}
We discuss the previous work on which this work builds. For $\G = \G_m$ the
multiplicative group, \thmref{thm:catAbelian} was proven in \cite{ZCovers} and
\cite{BZCovers}. It was proven for $\G$ an abelian variety in \cite{GavThesis}
under the assumption of the continuum hypothesis, i.e.\ with only the first two
of the three steps described above. A path to the full result was set out in
\cite{ZCovers2}, and for $\G$ an elliptic curve the full result was obtained
in \cite{BaysThesis}.

These previous proofs of (iii) use algebraic techniques
analogous to, but substantially more complicated and limited than, the model
theoretic techniques of the present work.
In previous work, the problem was considered one of categoricity in infinitary
logic, and correspondingly the techniques applied were those of Shelah's
theory of excellent classes, and more specifically Zilber's adaptation to
Quasiminimal Excellent (QME) classes. It was key to the developments in this
paper to instead consider the problem in terms of first-order classification
theory. Although our results do not fall literally into the context of Shelah's
classification theory for superstable theories - essentially because we are
interested in models where the kernel of $\exp$ is rather unsaturated - and
though ideas from the theory of Abstract Elementary Classes will still play a
(largely implicit) role, the argument which allows us to get (iii) in the
generality we do is an adaptation of Shelah's ``NOTOP'' argument, which
reduces the condition of excellence in the first-order case to a simpler
condition.

In fact, while the current paper was in preparation, it was found that this
same idea applies in the context of QME classes \cite{BHHKK}. For the benefit
of any readers familiar with that paper, we mention how it relates to this
paper. Our main results do not fit into the definition of QME, even if we
assume the kernel to be countable: we consider finite Morley rank groups which
are not necessarily almost strongly minimal; correspondingly, the covers are
not even almost quasiminimal. In the case discussed above of a semiabelian
variety $\G$, however, the covers structure can be seen as almost quasiminimal
- and moreover it is bi-interpretable with the quasiminimal structure induced
on the inverse image in the cover of a Kummer-generic (in the sense of
\cite{BGHKg}) curve in $\G$ which generates $\G$ as a group. So in this case,
(iii) above could be deduced from the main result of \cite{BHHKK}.

\subsection{Notation}
We use unmarked tuple notation throughout: if $A$ is a subset of a sort in
a structure, we write $x\in A$ if $x$ is a finite tuple each co-ordinate of
which is an element of $A$.

We write $a \equiv _C b$ to mean that $\tp(a/C) = \tp(b/C)$, and we sometimes write
$\sigma : A \xrightarrow{\cong}_C B$ to denote that $\sigma$ is an isomorphism which is the
identity on $C \subseteq  A\cap B$.

If $G$ is an abelian group, we write $G[n]$ for the $n$-torsion, and
we write $G[\infty]$ or $\Tor(G)$ for the torsion subgroup $\bigcup_n G[n]$.

We introduce further specialised notation in \secref{sec:hat}, after making
relevant definitions.

\subsection{Acknowledgements}

This paper grew out of the thesis of the first author, supervised by Boris
Zilber, and many of the ideas are due eventually to Zilber. The first author
was also strongly influenced by the work of, and discussions with, Misha
Gavrilovich. We would like to thank Udi Hrushovski for first suggesting the
relevance of NOTOP/PMOP. The authors would also like to thank John
Baldwin, Daniel Bertrand, Juan Diego Caycedo, Martin Hils and Jonathan Kirby
for helpful pointers and discussion at various stages of this project.

We are grateful to the anonymous referee for an earlier version of the paper,
whose suggestions and exhortations improved the paper substantially.

\section{Profinite universal covers}
\label{sec:hat}
In this section, we consider the algebra and basic model theory of our
``profinite universal covers'' of divisible commutative finite Morley rank
groups.

\subsection{$\widehat{G}$}

We begin with some elementary definitions and remarks concerning abstract
commutative groups.

If $G$ is a commutative group and $[n]$ is the multiplication-by-$n$ map, let
$\widehat{G}$ be the inverse limit $\liminv [n] : G \rightarrow  G$. Let $\rho_n : \widehat{G} \twoheadrightarrow  G$ be
the corresponding projections, so $[n]\rho_{nm} = \rho_m$. Let $\rho :=
\rho_1$. We often write elements of $\widehat{G}$ in the form $\gamma=(g_n)_n$, so
then $\rho_n(\gamma)=g_n$.

If $\theta : G \rightarrow  H$, define $\widehat{\theta} : \widehat{G} \rightarrow  \widehat{H}$ by $\widehat{\theta}((g_n)_n) =
(\theta(g_n)_n)$.

\begin{definition}
    The {\em divisible part} $G^o$ of an abelian group $G$ is the maximal
    divisible subgroup, $G^o = \bigcap_{n>0} nG$.
\end{definition}

We will mostly work in contexts in which $G^o$ is the ``connected component''
of $G$ in one sense or another, hence the notation.

Say a commutative group $G$ is {\em divisible-by-finite} if its divisible part
$G^o$ has finite index in $G$. 

We note that $\widehat{\cdot}$ is an exact functor on divisible-by-finite groups:

\begin{lemma} \label{lem:hatExact}
    Suppose $0\rightarrow A\rightarrow B\rightarrow C\rightarrow 0$ is an exact sequence of divisible-by-finite groups.
    Then $0\rightarrow \widehat{A}\rightarrow \widehat{B}\rightarrow \widehat{C}\rightarrow 0$ is exact.
\end{lemma}
\begin{proof}
    Denote the given map $B \rightarrow  C$ as $\theta$. The only difficulty is the
    surjectivity of $\widehat{\theta}:\widehat{B}\rightarrow \widehat{C}$. We may assume $A\rightarrow B$ is an inclusion.
    Factoring $\theta$ via $B/(A^o)$, we see that it suffices to prove the
    surjectivity of $\widehat{B}\rightarrow \widehat{C}$ under the assumption that $A$ is divisible or
    finite.

    \begin{enumerate}[(a)]\item Suppose $A$ is divisible. We first show that given any $n>0$, $b\in
        B$ and $c'\in C$ such that $\theta(b)=[n]c'$, there is $b' \in B$ such
        that $[n]b' = b$ and $\theta(b')=c'$. Say $\theta(b'')=c'$; then
        $\theta([n]b'')=[n]c'=\theta(b)$, so $b-[n]b'' \in A$. Say $a'\in A$
        with $[n]a'=b-[n]b''$. Then $b' := b''+a'$ is as required.



	Given $\widehat{c}$, we can therefore inductively define $b_{n!}$ such that
	$[n+1]b_{(n+1)!}=b_{n!}$ and $\theta(b_{n!}) = \rho_{n!}(\widehat{c})$. Easily, there
        is a unique $\widehat{b} \in \widehat{B}$ such that $\rho_{n!}(\widehat{b})=b_{n!}$, and it
        satisfies $\widehat{\theta}(\widehat{b})=\widehat{c}$.

    \item Suppose $A$ is finite, say $[n]A=0$. Then $\theta$ factors $[n]$ - indeed,
        let $\phi$ be the map making the left triangle in the following
        diagram commute, then note that the right triangle also commutes.
	But $\widehat{[n]}$ is surjective, hence so is $\widehat{\theta}$.


	\[ \xymatrix{
	    B \ar[rr]^{[n]} \ar[rd]_\theta &                              & B \ar[rd]^ \theta & \\
	                                   & C \ar[rr]_{[n]} \ar[ru]^\phi &                   & C
	} \]
    \end{enumerate}
\end{proof}

\subsection{$\widehat{T}$}
\label{ssec:That}
Now let $\G$ be a connected commutative finite Morley rank group, and suppose
moreover that it is divisible. Then $[n] : \G \twoheadrightarrow  \G$ has finite kernel, and
it follows that any definable subgroup $A \leq  \G$ is divisible-by-finite, and
its divisible part $A^o$ is its connected component in the
model-theoretic sense, namely the smallest definable subgroup of finite index.

Let $T := \Th(\G)$; we assume (by appropriate choice of language) that $T$ has
quantifier elimination. We also assume that the language $L$ of $T$ is countable.

Let $\widehat{T}$ be the theory of $(\widehat{\G},\G)$ in the two-sorted language $\widehat{L}$ consisting of the
maps $\rho_n$ for each $n$, the full $T$-structure on $\G$, and, for each
$\acleq(\emptyset )$-definable connected subgroup $H$ of $\G^n$, a predicate $\widehat{H}$
interpreted as the subgroup $\widehat{H} = \{ x \;|\; \Meet_n \rho_n(x) \in H \}$ of
$\widehat{\G}^n$. We will see below that $\widehat{T}$ depends only on $T$.

For quantifier elimination purposes, we actually assume (by expanding $T$ by
constants if necessary) that every $\acleq(\emptyset )$-definable connected subgroup of
$\G^n$ is $\emptyset $-definable.

We say that $T$ is {\em rigid} if for $\G$ a saturated model of $T$, every
definable connected subgroup of $\G^n$ is defined over $\acleq(\emptyset )$.
Although the results of this section do not require rigidity, our language is
chosen with it in mind.

\begin{remark}
    As in the proof of \lemref{lem:hatExact}, any definable finite group
    cover of $\G$ is dominated by some $[n]$, so is ``seen'' by $\widehat{\G}$.

    Note that divisibility is crucial for this - for example, the
    Artin-Schreier map $x \mapsto  x^p-x$ is a finite definable group cover of the
    additive group in $\operatorname{ACF}_p$ which isn't handled by our setup (c.f.\ 
    \cite{BGHKg} where this issue is discussed).

\end{remark}

\begin{notation}
    Suppose $(\widetilde{M},M)$ is an $\widehat{L}$-structure.
    \begin{itemize}\item 
      If $\widetilde{a} \in \widetilde{M}$ is a tuple, then we will write $a_n$ for
      $\rho_n(\widetilde{a})$, and $a$ for $\rho(\widetilde{a})$, and $\widehat{a}$ for $(a_n)_n$.
      Similarly, if $\widetilde{A} \subseteq  \widetilde{M}$, we write $\widehat{A}$ for $\Cup_n\rho_n(\widetilde{A})$.

    \item We will usually just write $\widetilde{M} \vDash  \widehat{T}$ to mean $(\widetilde{M},M) \vDash  \widehat{T}$.

    \item $\widehat{G}$ and $\widehat{H}$ will always denote the predicates corresponding to
      $\emptyset $-definable connected subgroups $G$ and $H$ of a cartesian power of
      $\G$. $\widehat{C}$ will denote a coset of some $\widehat{H}$.

    \item $\widehat{G}(\widetilde{a})$ is the definable set $\{ \widetilde{x} \;|\; (\widetilde{x},\widetilde{a}) \in \widehat{G} \}$, a coset of
      $\widehat{G}(0)$. Similarly for $G(a)$.

    \item $\ker$ is the definable set $\ker(\rho)$.

    \item $\ker^0$, the divisible part of $\ker$,
        is the $\Meet$-definable set $\Meet_n \rho_n(x) = 0$.

    \item Abusively, $\ker$ and $\ker^0$ also refer to the corresponding sets in
      cartesian powers of $\G$.

    \item If $\pr : \widetilde{M}^n \twoheadrightarrow  \widetilde{M}^m$ is a co-ordinate projection, we also write
      $\pr$ for the corresponding co-ordinate projection $M^n \twoheadrightarrow  M^m$, and
      we also write $\pr$ for the restriction of $\pr$ to a subset $\widetilde{A} \subseteq 
      \widetilde{M}^n$ or $A \subseteq  M^n$, leaving it to context to disambiguate.

    \item
	$\widehat{H}_0 := \widehat{H} \cap \ker^0$, a $\Q$-subspace of the $\Q$-vector space
      $\ker^0$.
    \end{itemize}
\end{notation}

\subsection{Axiomatisation and quantifier elimination}

We now give a list of first order axioms for a structure $\widetilde{M}$ in the
language of $\widehat{T}$. We show in \propref{prop:QE} that these axioms axiomatise
$\widehat{T}$.
\begin{axioms} \mbox{}
    \begin{enumerate}[({A}1)]\item \label{ax:T}\label{ax:first} $M \vDash  T$
    \item \label{ax:grp}Let $\Gamma_+$ be the graph of the group operation on
      $\G$. Then $\widehat{\Gamma_+}$ is the graph of a commutative divisible torsion
      free group operation, which we write as ``$+$'' and work with respect to
      in the following axioms;
    \item \label{ax:diag} Let $\Delta$ be the diagonal subgroup of $\G$, i.e.\ 
    the graph of equality. Then $\widehat{\Delta}$ is the diagonal subgroup of $\widetilde{M}$.
    \item \label{ax:subgrps} Each $\widehat{H}$ is a divisible subgroup.
    \item \label{ax:coherent} $[m] \rho_{nm} = \rho_n$.
    \item \label{ax:epic} $\rho_n(\widehat{H}) = H$.
    \item \label{ax:lattice} $\widehat{G} \cap \widehat{H} = \widehat{H'^o}$ where $H':=G\cap H$.
    \item \label{ax:torsionKer} If $H\subseteq G$ and $\Tor(H)=\Tor(G)$, then
      $\widehat{H}\cap\ker=\widehat{G}\cap\ker$.
    \item \label{ax:proj}\label{ax:last}
        If a co-ordinate projection $\pr$ induces a surjection
	$\pr:G\twoheadrightarrow H$ with kernel $K$ then 
	the corresponding co-ordinate projection induces a surjection
	    $\pr:\widehat{G}\twoheadrightarrow \widehat{H}$ with kernel $\widehat{K^o}$.
    \end{enumerate}
\end{axioms}

In $\widehat{\G}$ and other models of $\widehat{T}$ which we will be considering in the
applications, $\widehat{H}$ will be the divisible part of $\rho^{-1}(H)$. In this case,
the following lemma substantially simplifies verification of the axioms.

\begin{lemma} \label{lem:homMod}
    Suppose $V$ is a divisible torsion-free abelian group,
    and $\rho : V \twoheadrightarrow  \G$ is a surjective homomorphism.
    For $H$ a connected definable subgroup of $\G^n$, let $\widehat{H}$ be the
    divisible part of $\rho^{-1}(H)$. Suppose that $\ker$ has trivial
    divisible part (i.e.\ $\widehat{0}=0$).
    Let $\rho_n(x) := \rho(x/n)$.
    Then with this structure, $V$ satisfies \axref{ax:first}-\axref{ax:last}
    if it satisfies \axref{ax:epic} and \axref{ax:proj}.
\end{lemma}
\begin{proof}
    \begin{enumerate}[({A}1)]\item 
        Immediate.
    \item 
        $\widehat{\Gamma_+} = \{ (x,y,z) \;|\; \forall n. x/n + y/n - z/n \in \ker \}$,
        which, since $\ker$ has trivial divisible part, is the graph of $+$ on
        $V$.
    \item 
        Similar.
    \item 
        Immediate from the definition of $\widehat{H}$.
    \item 
        Immediate from the definition of $\rho_n$.
    \item Assumed.
    \item 
        $\widehat{G} \cap \widehat{H}$ is a divisible subgroup of $\rho^{-1}(H'^o)$ (where
        $H'=G\cap H$), so is contained in $\widehat{H'^o}$. Similarly for the
        converse inclusion.
    \item 
        Suppose $H \subseteq  G$, $\zeta \in \widehat{G} \cap \ker$, and $\Tor(G) = \Tor(H)$.
        Then $\Q\zeta \subseteq  \rho^{-1}(H)$, so $\zeta \in \widehat{H}$ by definition of
        $\widehat{H}$.
    \item Assumed.
    \end{enumerate}
\end{proof}

\begin{lemma} \label{lem:hatMod}
    $\widehat{\G}$ satisfies the axioms \axref{ax:first}-\axref{ax:last}.
\end{lemma}
\begin{proof}
    We appeal to \lemref{lem:homMod}.
    \axref{ax:epic} and the fact that $\widehat{H}$ is the connected component of
    $\rho^{-1}(H)$ are immediate from the definitions. \axref{ax:proj} follows
    from \lemref{lem:hatExact}.
\end{proof}

\begin{proposition} \label{prop:QE}
    \axref{ax:first}-\axref{ax:last} axiomatise $\widehat{T}$, and $\widehat{T}$ has quantifier
    elimination.
\end{proposition}
\begin{proof}
    Let $\widehat{T}'$ be the theory axiomatised by \axref{ax:first}-\axref{ax:last}.

    We show that $\widehat{T}'$ is complete and admits quantifier elimination.
    Completeness and \lemref{lem:hatMod} then implies that $\widehat{T}'=\widehat{T}$.

    We first note some elementary deductions from the axioms:
    \begin{enumerate}[({D}1)]\item \label{ded:prod}
        For any $H$ and $G$, we have by \axref{ax:proj} applied to $\pr
	: H\times G \twoheadrightarrow  G$ that $\widehat{H\times G} = \widehat{H} \times \widehat{G}$.


    \item \label{ded:hom}
        By \axref{ax:epic} applied to the graph of the group operation,
	the $\rho_n$ are homomorphisms.
    \item \label{ded:snake}
        In the context of \axref{ax:proj},
        if $K/K^o$ has exponent $e$, then $e\cdot(\widehat{H}\cap\ker) \subseteq  \pr(\widehat{G}\cap\ker)$.
        Indeed, this follows from \axref{ax:proj}, \axref{ax:epic}, and the
        snake lemma applied to the following diagram

        \[ \xymatrix{
            0 \ar[r] & \widehat{K^o} \ar[r]\ar[d]^\rho & \widehat{G} \ar[r]\ar[d]^\rho & \widehat{H} \ar[r]\ar[d]^\rho & 0 \\
            0 \ar[r] & K \ar[r]                 & G \ar[r]             & H  \ar[r]            & 0
        }. \]
    \end{enumerate}

    Now suppose we have $\omega$-saturated models $\widetilde{M},\widetilde{N}\vDash \widehat{T}'$, finite tuples
    $\widetilde{m} \equiv _{qf} \widetilde{n}$ from each with equal quantifier-free types, and a point
    $\widetilde{m}'\in \widetilde{M}$. To conclude the proof, we must find $\widetilde{n}'\in \widetilde{N}$ such that
    $(\widetilde{m},\widetilde{m}') \equiv _{qf} (\widetilde{n},\widetilde{n}')$.

    Let $\widehat{H}$ be least such that it contains $\widetilde{m}$. This exists by
    $\omega$-stability of $T$ and \axref{ax:lattice}; c.f.\ 
    \defref{defn:grploc} below.

    Let $\widehat{G}$ be least such that it contains $(\widetilde{m},\widetilde{m}')$, and let $\pr :
    (\widetilde{m},\widetilde{m}') \mapsto  \widetilde{m}$ be the co-ordinate projection.

    We work in $\widehat{T}'$; when we make a statement which is expressible as a
    sentence in $\widehat{L}$, we mean that it is a consequence of $\widehat{T}'$.

    $\pr(\widehat{G}) = \widehat{\pr(G)}$ by \axref{ax:proj}, so $\widehat{H} \subseteq  \pr(\widehat{G})$, and 
    $\pr^{-1}(\widehat{H}) = \widehat{H} \times \widehat{\G} = \widehat{H\times \G} = \widehat{\pr^{-1}(H)}$, so $\widehat{G} \subseteq 
    \pr^{-1}(\widehat{H})$ and so $\pr(\widehat{G}) \subseteq  \widehat{H}$. So $\widehat{H} = \pr(\widehat{G})$, and so
    $\pr : \widehat{G} \twoheadrightarrow  \widehat{H}$ and $\pr : G \twoheadrightarrow  H$.

    \begin{claim}
        $\pr: \widehat{G}_0(\widetilde{N}) \twoheadrightarrow  \widehat{H}_0(\widetilde{N})$
    \end{claim}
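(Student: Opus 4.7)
The plan is to lift an arbitrary $\widehat{h}\in\widehat{H}_0(\widetilde{N})$ to $\widehat{G}_0(\widetilde{N})$ as follows: first pick any $\pr$-preimage $\widehat{g}'\in\widehat{G}(\widetilde{N})$ of $\widehat{h}$, and then correct $\widehat{g}'$ by adding a suitable element of the kernel $\widehat{K^o}$ of $\pr:\widehat{G}\maps\widehat{H}$ (where $K := \ker(\pr:G\maps H)$) to push it into $\ker^0$. The infinitary correction will be produced by $\omega$-saturation of $\widetilde{N}$.

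Existence of the preimage $\widehat{g}'$ is immediate from \axref{ax:proj}(I). Writing $g_n' := \rho_n(\widehat{g}')$, the hypothesis $\widehat{h}\in\ker^0$ forces $g_n'\in K$ for every $n$. A key observation is that each $g_n'$ must in fact lie in the connected component $K^o$: since $\widehat{g}'$ is coherent, $g_n' = [m]g_{mn}'$ for every $m$, and by the same argument each $g_{mn}'$ lies in $K$, so $g_n'\in mK$ for every $m$; taking $m$ to be the exponent $N$ of $K/K^o$ yields $g_n'\in NK\subseteq K^o$.

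Next, consider the partial type $p(\widehat{x})$ over $\widehat{g}'$ expressing $\widehat{x}\in\widehat{K^o}$ together with $\rho_n(\widehat{x})=-g_n'$ for every $n$. By \axref{ax:coherent}, any finite subset of $p$ is equivalent to a single condition $\rho_M(\widehat{x})=-g_M'$ for $M$ a common multiple of the $n$ appearing; since $-g_M'\in K^o$, this is solvable in $\widehat{K^o}(\widetilde{N})$ by \axref{ax:epic}. Hence $p$ is finitely satisfiable and is realized in $\widetilde{N}$ by some $\widehat{k}$. Then $\widehat{g} := \widehat{g}' + \widehat{k}$ lies in $\widehat{G}(\widetilde{N})$ (using $\widehat{K^o}\subseteq\widehat{G}$ from \axref{ax:lattice}), projects to $\widehat{h}$ (since $\widehat{k}\in\ker(\pr)$), and satisfies $\rho_n(\widehat{g})=0$ for all $n$, witnessing $\widehat{g}\in\widehat{G}_0(\widetilde{N})$. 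The only mildly delicate step is the observation that $g_n'\in K^o$; this is what enables the correction to be carried out inside the connected-subgroup predicate $\widehat{K^o}$ rather than merely in a preimage of $K$, and so is what makes the $\omega$-saturation argument go through.
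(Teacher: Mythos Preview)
Your proof is correct, but it takes a different route from the paper's. The paper applies \axref{ax:proj}(II) directly: from $\pr:\widehat{G}\cap\ker\mapsonto N(\widehat{H}\cap\ker)$ one gets $\pr:m(\widehat{G}\cap\ker)\mapsonto mN(\widehat{H}\cap\ker)$ for each $m$, and then $\omega$-saturation passes to the intersection $\widehat{G}_0\mapsonto\widehat{H}_0$. Your argument instead bypasses \axref{ax:proj}(II) entirely: you lift via \axref{ax:proj}(I) to some $\widehat{g}'\in\widehat{G}$, observe (using coherence and the exponent bound on $K/K^o$) that each $\rho_n(\widehat{g}')$ already lands in $K^o$, and then correct by an element of $\widehat{K^o}$ found via \axref{ax:epic} and $\omega$-saturation. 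Both proofs ultimately appeal to $\omega$-saturation over a single parameter, but yours trades the purpose-built axiom \axref{ax:proj}(II) for a more explicit construction; in effect you are re-deriving what is needed from (I), \axref{ax:coherent}, and \axref{ax:epic}. The paper's version is shorter, while yours makes visible the useful intermediate fact that any $\widehat{G}$-lift of a point of $\widehat{H}_0$ already has all its $\rho_n$-images in $K^o$.
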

    \begin{proof}
        Work in $\widetilde{N}$.
	Let $K$ be the kernel of $\pr : G\twoheadrightarrow H$, and suppose $K/K^o$ has
        exponent $e$, so by \dedref{ded:snake},
		$\pr : \widehat{G}\cap\ker \twoheadrightarrow  e (\widehat{H}\cap\ker)$.
	So for each $k$,
		\[ \pr : k (\widehat{G}\cap\ker) \twoheadrightarrow  ke (\widehat{H}\cap\ker).\]
	But then by $\omega$-saturation of $\widetilde{N}$, 
	    \[ \pr : \widehat{G}_0 = \bigcap_k k (\widehat{G}\cap\ker) \twoheadrightarrow 
                \bigcap_k ke (\widehat{H}\cap\ker) = \widehat{H}_0 . \]
    \end{proof}

    By QE in $T$ and $\omega$-saturation, we can find $\widetilde{n}' \in \widetilde{N}$ such that
    \begin{equation}
      \tag{*}
      (\widehat{m},\widehat{m}') \equiv _{qf} (\widehat{n},\widehat{n}')
    \end{equation}
    as infinite tuples; in particular, $\rho_k(\widetilde{n},\widetilde{n}') \in G$ for all $k$, and
    so by $\omega$-saturation we find $\widetilde{n}''\in \widehat{G}(\widetilde{N})$ such that
    $\rho_k(\widetilde{n},\widetilde{n}')=\rho_k(\widetilde{n}'')$ for all $k$, and so $\widetilde{\zeta} := \widetilde{n}'' -
    (\widetilde{n},\widetilde{n}') \in \ker^0(\widetilde{N})$.
    Then $\pr \widetilde{\zeta} \in \widehat{H}_0(\widetilde{N})$, and so by the Claim there is $\widetilde{\zeta}' \in
    \widehat{G}_0(\widetilde{N})$ with $\pr \widetilde{\zeta}' = \pr \widetilde{\zeta}$, and then $\widetilde{n}''+\widetilde{\zeta}' \in \widehat{G}$
    and $\pr (\widetilde{n}''+\widetilde{\zeta}') = \widetilde{n}$.

    So we can assume $(\widetilde{n},\widetilde{n}') \in \widehat{G}$, while still satisfying (*). Now suppose
    $(\widetilde{n},\widetilde{n}')$ is contained in a proper subgroup $\widehat{G'} < \widehat{G}$. Then
    $\rho_k(\widetilde{m},\widetilde{m}')\in G'$ for each $k$, so by $\omega$-saturation, $(\widetilde{m},\widetilde{m}')
    \in \widehat{G'}+\widetilde{\zeta}$ for some $\widetilde{\zeta} \in \widehat{G}_0 \setminus \widehat{G'}_0$. So
    $\widehat{G'}_0(\widetilde{M}) < \widehat{G}_0(\widetilde{M})$, so, by \axref{ax:torsionKer},
    $\Tor(G')<\Tor(G)$. Hence by \axref{ax:epic} and \axref{ax:coherent}, for
    each $k$ there is $\widetilde{\zeta}\in \widehat{G} \setminus \widehat{G'}$ with $\rho_k(\widetilde{\zeta})=0$, and so
    by saturation $\widehat{G'}_0(\widetilde{N}) < \widehat{G}_0(\widetilde{N})$.

    Now $\pr(\widehat{G'}) = \widehat{H}$, by the same argument which showed $\pr(\widehat{G}) = \widehat{H}$,
    and so the Claim applies also to $\widehat{G}$.
    So $\pr(\widehat{G'}_0(\widetilde{N})) = \widehat{H}_0(\widetilde{N}) = \pr(\widehat{G}_0(\widetilde{N}))$.
    Hence we have a strict inclusion $\widehat{G'}_0(0) < \widehat{G}_0(0)$ in $\widetilde{N}$ for the
    fibres above $0\in \widehat{H}$. So by translating, we can find $\widetilde{n}'$ satisfying
    (*) and such that $(\widetilde{n},\widetilde{n}') \notin \widehat{G'}$. Now $\widehat{G}_0(0)$ is not covered by
    any finitely many such $\widehat{G'}_0(0)$, since they are proper $\Q$-subspaces.
    So we can avoid any finitely many such proper subgroups simultaneously,
    and so by $\omega$-saturation, we find $\widetilde{n}'$ satisfying (*) for which $\widehat{G}$
    is least such that it contains $(\widetilde{n},\widetilde{n}')$.

    It follows, using \axref{ax:diag} for formulae involving equality on the
    sort $\widehat{\G}$, that $(\widetilde{m},\widetilde{m}') \equiv _{qf} (\widetilde{n},\widetilde{n}')$ as required.
\end{proof}
\begin{remark}
    Assuming that $T$ has finite Morley rank is a much stronger assumption
    than we need for this result. Really the result is about the reduct to the
    abelian structure of $\G$ with predicates for the $\acleq(\emptyset )$-definable
    subgroups of $\G^n$; all we require is that these subgroups have divisible
    definable connected components, and the descending chain condition on
    definable subgroups. For example, $\G$ could be real semiabelian variety
    $S(\R)$ with the semialgebraic structure of its interpretation in the real
    field.
\end{remark}
\begin{remark}
    The assumption that each $\acleq(\emptyset )$-definable connected subgroup $H$ is
    actually $\emptyset $-definable in $T$ is necessary, because $H$ is $\emptyset $-definable
    in $\widehat{T}$ as the image of $\widehat{H}$, while quantifier elimination implies that
    $\G$ has only the structure of $T$.
\end{remark}

\begin{corollary} \label{cor:QE}
        Suppose $\widetilde{B} \subseteq  \widetilde{M} \vDash  \widehat{T}$, and suppose $X \subseteq  \widetilde{M}^n$ is definable over $\widetilde{B}$.
        There are $H_i$, $\widetilde{b}^i \in \widetilde{B}$, $m>0$, and $\emptyset  \neq  Y_i \subseteq  H_i(b^i_m)$,
        with $i$ ranging through a finite set, and with each $Y_i$ being
        $T$-definable over $\widehat{B}$, such that
        \[ \bigcup_i \left({\widehat{H}_i(\widetilde{b}^i) \cap \rho_m^{-1}(Y_i)}\right) \subseteq  X
        \subseteq  \bigcup_i \widehat{H}_i(\widetilde{b}^i) .\]

\end{corollary}
\begin{proof}
        This follows from the QE, using \axref{ax:lattice} to reduce an
        intersection of cosets to a single coset,
        using \axref{ax:coherent} to reduce to a single $\rho_m$,
        and using that (by \axref{ax:epic}) $\rho_m(\widehat{H}(\widetilde{b})) \subseteq  H(b_m)$.
\end{proof}

\begin{definition} \label{defn:grploc}
    Let $\widetilde{B} \subseteq  \widetilde{M} \vDash  \widehat{T}$, and $\widetilde{a} \in \widetilde{M}$. Then $\grploc(\widetilde{a}/\widetilde{B})$, the
    \defnstyle{group locus} of $\widetilde{a}$ over $\widetilde{B}$, is the smallest set containing
    $\widetilde{a}$ of the form $\widehat{H}(\widetilde{b})$ with $\widetilde{b} \in \widetilde{B}$.
\end{definition}

\begin{remark}
    Such a smallest set exists, by \axref{ax:lattice} and $\omega$-stability
    of $T$.

    Clearly $\grploc(\widetilde{a}/\widetilde{B})$ is definable over $\widetilde{B}$; however, it is not
    not true that $\grploc(\widetilde{a}/\widetilde{B})$ is necessarily the smallest coset of
    a $\widehat{G}$ containing $\widetilde{a}$ which is definable over $\widetilde{B}$.
    For example, suppose $\G$ is a torsion-free group, so $\rho$ is an
    isomorphism, and consider a coset $\widehat{G}+\widetilde{a}$ with $a \in \dcl^{T}(B) \setminus B$.
\end{remark}

\begin{remark} \label{rem:grplocgrp}
    Using \axref{ax:grp}, \axref{ax:lattice}, and \axref{ax:proj}, we see that
    $\widehat{H}(\widetilde{b}+\widetilde{b}')$ can be rewritten in the form $\widehat{G}(\widetilde{b},\widetilde{b}')$,
    and similarly for $\widehat{H}(\widetilde{b})+\widetilde{b}'$. So in particular,
    $\grploc(\widetilde{a}/\widetilde{B}) = \grploc(\widetilde{a}/ \left<{\widetilde{B}}\right>)$ where $\left<{\widetilde{B}}\right>$ is the
    subgroup of $\widetilde{M}$ generated by $\widetilde{B}$.
\end{remark}

\begin{lemma} \label{lem:typesKerPres}
    Let $\widetilde{B} \subseteq  \widetilde{M} \vDash  \widehat{T}$, and $\widetilde{a} \in \widetilde{M}$. Let $\widehat{C} := \grploc(\widetilde{a}/\widetilde{B})$.

    Suppose $\ker(\widetilde{M}) \subseteq  \widetilde{B}$.

    Then $p'(\widetilde{x}) := \tp(\widehat{a}/\widehat{B}) \cup \{ \widetilde{x} \in \widehat{C} \} \vDash  \tp(\widetilde{a}/\widetilde{B})$
\end{lemma}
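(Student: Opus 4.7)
The plan is to reduce the lemma to \corref{cor:forking}, which says that $\operatorname{tp}(\widetilde{a}/\widetilde{A})$ is determined by $\operatorname{tp}(\widehat{a}/\widehat{A})$ together with the group locus $\widehat{C}$. Since $\operatorname{tp}(\widehat{a}/\widehat{A}) \subseteq p'$, it suffices to show that any realization $\widetilde{a}' \models p'$ has $\operatorname{grploc}(\widetilde{a}'/\widetilde{A}) = \widehat{C}$; as $\widetilde{a}' \in \widehat{C}$ is part of $p'$, one inclusion is immediate, and the task is to rule out any strictly smaller $\widetilde{A}$-definable coset containing $\widetilde{a}'$.

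Arguing by contradiction, suppose $\widetilde{a}' \in \widehat{D} \subsetneq \widehat{C}$ for some $\widetilde{A}$-definable coset $\widehat{D}$. Using the QE, $\widehat{D}$ may be presented as a fiber $\{x : (x,\widetilde{c}) \in \widehat{H}\}$ for some $\widetilde{c} \in \widetilde{A}$ and some $\emptyset$-definable connected subgroup $H$ of a suitable Cartesian power of $\G$. From $(\widetilde{a}',\widetilde{c}) \in \widehat{H}$ and \axref{ax:epic}, $(\rho_m(\widetilde{a}'),\rho_m(\widetilde{c})) \in H$ for every $m$; by $\operatorname{tp}(\widehat{a}'/\widehat{A}) = \operatorname{tp}(\widehat{a}/\widehat{A})$ the same holds for $\widetilde{a}$, so $(\widetilde{a},\widetilde{c}) \in \bigcap_m \rho_m^{-1}(H)$. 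Working in a sufficiently $\omega$-saturated context, as in the proof of the QE proposition, one can then write $(\widetilde{a},\widetilde{c}) = \widetilde{h} + \widetilde{e}$ with $\widetilde{h} \in \widehat{H}$ and $\widetilde{e} = (\widetilde{e}_1,\widetilde{e}_2) \in \ker^0$.

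Now the kernel hypothesis enters: $\widetilde{e}_1, \widetilde{e}_2 \in \ker \subseteq \widetilde{A}$, so $\widetilde{e}_1 + \widehat{H}(\widetilde{c}-\widetilde{e}_2)$ is an $\widetilde{A}$-definable coset containing $\widetilde{a}$. By minimality of $\widehat{C}$ it must contain $\widehat{C}$, and in particular $\widetilde{a}'$, so that $(\widetilde{a}' - \widetilde{e}_1, \widetilde{c} - \widetilde{e}_2) \in \widehat{H}$. Subtracting this element of the subgroup $\widehat{H}$ from $(\widetilde{a}',\widetilde{c}) \in \widehat{H}$ yields $\widetilde{e} \in \widehat{H}$, whence $(\widetilde{a},\widetilde{c}) = \widetilde{h} + \widetilde{e} \in \widehat{H}$, placing $\widetilde{a} \in \widehat{D}$ and contradicting the minimality of $\widehat{C}$. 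The main obstacle is the decomposition step itself --- the equality $\bigcap_m \rho_m^{-1}(H) = \widehat{H} + \ker^0$ is not automatic in arbitrary models of $\widehat{T}$, and is exactly what $\omega$-saturation buys us in the QE proof; once the decomposition is in hand, the kernel hypothesis converts the $\ker^0$-error into a genuine $\widetilde{A}$-parameter, after which the minimality of $\widehat{C}$ forces the contradiction by a purely algebraic manipulation inside $\widehat{H}$.
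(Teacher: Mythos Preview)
Your overall strategy coincides with the paper's: reduce via QE to showing that any realisation $\widetilde{a}'$ of $p'$ lies in no strictly smaller $\widetilde{A}$-coset than $\widehat{C}$, then use the kernel hypothesis to turn an ``up to kernel'' decomposition into a genuine $\widetilde{A}$-coset containing $\widetilde{a}$, and finish with the coset-intersection trick. The algebraic endgame you give is correct and matches the paper's.

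However, your decomposition step is genuinely flawed. You want $(\widetilde{a},\widetilde{c})\in\widehat{H}+\ker^0$ and you obtain it only by passing to an $\omega$-saturated extension. But the hypothesis is $\ker(\widetilde{M})\subseteq\widetilde{A}$, not that the kernel of some saturated extension is contained in $\widetilde{A}$; in a larger model the kernel grows, and your error terms $\widetilde{e}_1,\widetilde{e}_2$ need not lie in $\widetilde{A}$, so the next line (``$\widetilde{e}_1,\widetilde{e}_2\in\ker\subseteq\widetilde{A}$'') fails. Since $\widehat{H}+\ker^0$ is only type-definable, there is no elementarity argument that pulls the decomposition back down into $\widetilde{M}$ either.

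The fix is exactly what the paper does: do not aim for $\ker^0$. From $(\widetilde{a}',\widetilde{c})\in\widehat{H}$ you only need $(a,c)\in H$, and then \axref{ax:epic} gives $\widetilde{h}\in\widehat{H}(\widetilde{M})$ with $\rho(\widetilde{h})=(a,c)$, so $(\widetilde{a},\widetilde{c})-\widetilde{h}\in\ker(\widetilde{M})\subseteq\widetilde{A}$ directly, with no saturation. Your remaining argument then goes through verbatim with $\widetilde{e}\in\ker(\widetilde{M})$ in place of $\widetilde{e}\in\ker^0$.
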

\begin{proof}
    By the QE, we need only see that if $\widetilde{a}' \vDash  p'$ in an elementary extension,
    then for all $\widehat{H}$ and all $\widetilde{b}\in \widetilde{B}$, $\widetilde{a} \in \widehat{H}(\widetilde{b})$ iff $\widetilde{a}' \in \widehat{H}(\widetilde{b})$.

    Now $\widetilde{a} \in \widehat{H}(\widetilde{b})$ iff $\widehat{C} \leq  \widehat{H}(\widetilde{b})$, so the forward direction is clear.

    For the converse, suppose $\widetilde{a}' \in \widehat{H}(\widetilde{b})$. Then $a' \in H(b)$, hence $a \in
    H(b)$. So $(\widetilde{a},\widetilde{b}) \in \widehat{H} + \ker(\widetilde{M})$, i.e.\ $\widetilde{a} \in \widehat{H}(\widetilde{b}+\widetilde{\zeta})+\widetilde{\xi}$ for
    some $\widetilde{\zeta},\widetilde{\xi} \in \ker(\widetilde{M})$. But $\ker(\widetilde{M}) \subseteq  \widetilde{B}$,
    so by \remref{rem:grplocgrp},
    $\widehat{C} \leq  \widehat{H}(\widetilde{b}+\widetilde{\zeta})+\widetilde{\xi}$. So $\widetilde{a}' \in \widehat{H}(\widetilde{b}) \cap (\widehat{H}(\widetilde{b}+\widetilde{\zeta})+\widetilde{\xi})$; but
    this is an intersection of cosets of $\widehat{H}(0)$, so they are equal, and so
    $\widetilde{a} \in \widehat{H}(\widetilde{b})$.
\end{proof}

\begin{remark}
    It also follows from the QE that $\ker^0$ is indeed the connected
    component of the kernel in the model-theoretic sense, and more generally
    that $\widehat{H}+\ker^0$ is the connected component of $\rho^{-1}(H) = \widehat{H}+\ker$.
\end{remark}

\subsection{Lie exponential maps as models of $\widehat{T}$}
\label{ssec:lie}
Let $\G$ be a connected commutative Lie group which is also equipped with
a finite Morley rank group structure for which the model-theoretically
connected definable subgroups of $\G^n$ are topologically connected closed Lie
subgroups. This is the case for a connected commutative complex algebraic
group $\G(\C)$ with the Zariski structure, and we will discuss other examples
in \secref{sec:examples}.

Consider the Lie algebra $L\G = T_0\G$ with the Lie exponential map
$\exp : L\G \twoheadrightarrow  \G$ as an $\widehat{L}$-structure, with $\rho_m(x) := \exp(x/m)$
and $\widehat{H} := LH \leq  L\G^n$ for $H \leq  \G^n$ connected definable.

\begin{proposition} \label{prop:analMod}
    $L\G \vDash  \widehat{T}$.
\end{proposition}
\begin{proof}
    We appeal to \lemref{lem:homMod}.

    \axref{ax:epic} holds since $\exp$ is surjective for commutative Lie
    groups (since the image is a subgroup which contains a neighbourhood of
    the identity).

    So since $LH$ is divisible and $\ker\exp$ is discrete, $\widehat{H}=LH$ is the
    divisible part of $\rho^{-1}(H)$.

    Finally, \axref{ax:proj} follows from exactness of the functor $L$ for
    commutative Lie groups. To check this in the setting of \axref{ax:proj}, the
    only difficulty is the surjectivity of $LG \rightarrow  LH$, but this follows from
    the fact that the image is an $\R$-vector subspace of dimension
    $\dim(LG)-\dim(LK)=\dim(G)-\dim(K)=\dim(H)=\dim(LH)$.
\end{proof}

\begin{remark} \label{rem:LGelem}
    Note that $x \mapsto  (\exp(x/n))_n$
    is an embedding of $L\G$ into $\widehat{\G}$, which, by the QE, is elementary.
\end{remark}

\begin{remark}
    Lie theory provides a topological interpretation of the embedding of
    Remark~\ref{rem:LGelem}.

    The group $\widehat{\G}$ is easily seen to be isomorphic to the group of abstract
    group homomorphisms $\Hom(\Q,\G)$, by taking the image in $\widehat{\G}$ of $\theta
    \in \Hom(\Q,\G)$ to be $(\theta(1/n))_n$. Then by recalling that $x \mapsto  (t
    \mapsto  \exp(tx))$ is an isomorphism of $L\G$ with the group $\Hom_c(\R,\G)$ of
    1-parameter subgroups, and considering their restrictions to $\Q$, we see
    that the image in $\Hom(\Q,\G)$ of $L\G$ is precisely the subgroup
    $\Hom_c(\Q,\G)$ of continuous homomorphisms.

    By translation, $\theta \in \Hom(\Q,\G)$ is continuous iff it is continuous
    at $0$, which holds iff $\lim_{n \rightarrow  \infty} \theta(1/n) = 0 \in \G$, which
    holds iff this limit exists. So we can also identify $L\G$ as the subgroup
    of convergent elements of $\widehat{\G}$, when viewed as sequences $(a_n)_n$.
\end{remark}

\subsection{Stability theory of $\widehat{T}$}

\begin{proposition} \label{prop:forking}
    \begin{enumerate}[(i)]\item $\widehat{T}$ is superstable.
    \item If $\tp(\widetilde{a}/\widetilde{B})$ forks over $\widetilde{A}\subseteq \widetilde{B}$ then
	either $\tp(a/\widehat{B})$ forks over $\widehat{A}$
	or $\grploc(\widetilde{a}/\widetilde{B})$ is not definable over $\widetilde{A}$.
    \item $\widehat{T}$ has finite U-rank, i.e.\ $U(\widetilde{a}/\widetilde{B}) < \omega$ for any $\widetilde{a},\widetilde{B}$.
    \end{enumerate}
\end{proposition}
\begin{proof}
    \begin{enumerate}[(i)]\item
    By the QE, $\tp(\widetilde{a}/\widetilde{A})$ is determined by $\tp(\widehat{a}/\widehat{A})$ and $\grploc(\widetilde{a}/\widetilde{A})$.
    The former is determined by $\tp(a/\widehat{A})$ and $(\tp(a_k/\widehat{A}a))_k$, and since
    $[k]$ has finite kernel there are only finitely many possibilities for
    each $\tp(a_k/\widehat{A}a)$.
    $\grploc(\widetilde{a}/\widetilde{A})$ is determined by a choice of coset over $\widetilde{A}$.
    So by $\omega$-stability of $T$, if $|\widetilde{A}| = \lambda \geq  2^{|T|}$ then
    $|S(\widetilde{A})| \leq  (\lambda 2^{\aleph_0}) (|T|\lambda) = \lambda$.
    So $\widehat{T}$ is superstable.

    \item
    Suppose $\tp(\widetilde{a}/\widetilde{B})$ forks over $\widetilde{A}$;
    say $\phi(x,\widetilde{b}) \in \tp(\widetilde{a}/\widetilde{B})$ divides over $\widetilde{A}$.
    Let $\widehat{C} := \grploc(\widetilde{a}/\widetilde{B})$.
    We may assume $\phi(x,\widetilde{b}) \vDash  x\in \widehat{C}$.

    Suppose $\widehat{C}$ is defined over $\widetilde{A}$. Then also
    $\phi(x,\widetilde{b}') \vDash  x\in \widehat{C}$ for any $\widetilde{b}' \equiv _{\widetilde{A}} \widetilde{b}$.
    Now by \corref{cor:QE}, $\phi(x,\widetilde{b})$ is implied by a formula in
    $\tp(\widetilde{a}/\widetilde{B})$ of the form
	\[ x\in \widehat{C}\wedge \psi(\rho_n(x)) \]
    where $\psi(x)$ is a $T$-formula over $\widehat{B}$ implying
    $x\in \rho_n(\widehat{C})$. So since $\phi$ divides over $\widetilde{A}$,
    $\psi$ must divide over $\widehat{A}$. So $\tp(a_n/\widehat{B})$ forks over $\widehat{A}$, and since
    $a$ is algebraic over $a_n$, so does $\tp(a/\widehat{B})$.


    \item
    Finite rankedness of $\widehat{T}$ follows from (ii), finite rankedness of
    $T$, and the fact that Morley rank bounds the length of chains of
    connected subgroups in $T$.
    \end{enumerate}
\end{proof}

We end this section with a proposition giving a stability-theoretic analysis
of $\widehat{T}$; these results are not used explicitly in the remainder of the paper,
but they inform it.
\begin{proposition}
    Let $\widetilde{\monst} \vDash  \widehat{T}$ be a monster model.

    \begin{enumerate}[(i)]\item $\ker^0$ is stably embedded, in the sense that every relatively
      definable set is relatively definable with parameters from $\ker^0$.
      Consider $\ker^0(\widetilde{\monst})$ as a structure with the
      $\emptyset $-relatively-definable sets as predicates, and let $\widehat{T}^0 :=
      \Th(\ker^0(\widetilde{\monst}))$. Then $\widehat{T}^0$ is an $\omega$-stable 1-based group of
      finite Morley rank bounded above by the Morley rank of $T$.

      In particular, $\ker^0$ has finite relative Morley rank in the sense of
      \cite{BeBoPiSSharp}.

    \item $\im(\rho)$ is stably embedded with induced structure precisely that
      of $T$.

    \item
	Every type in $\widehat{T}^{\eq}$ is analysable in $\ker^0$ and $\im(\rho)$.

    \item $\ker^0$ is orthogonal to $\im(\rho)$.

    \item A regular type in $\widehat{T}^{\eq}$ is non-orthogonal to one of
	\begin{enumerate}[(a)]\item a strongly minimal type in $T^{\eq}$;
	\item $\quot{\widehat{G}_0}{\widehat{H}_0}$ where $H \leq  G$ have
	    no intermediate connected subgroup.
	\end{enumerate}
    \item $\widehat{T}$ has weak elimination of imaginaries in $T^{\eq}$ and the sorts
      $\quot{\widetilde{\monst}^n}{\widehat{H}}$.
    \end{enumerate}
\end{proposition}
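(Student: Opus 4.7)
The plan is to use the quantifier elimination of the preceding proposition together with the observation that $\widetilde{a}\mapsto\widehat{a}$ exhibits any tuple as an extension of an $\im(\rho)$-tuple by a $\ker^0$-fibre, and to read off the four claims in turn.

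For (i), to see that $\ker^0$ is stably embedded I apply the QE: a formula $\phi(\widetilde{x},\widetilde{b})$ defining a subset of $(\ker^0)^n$ is a Boolean combination of $T$-formulas in $\rho_m(\widetilde{x})$ and $\rho_m(\widetilde{b})$ (which on $\ker^0$ become sentences about $0$) and coset conditions $\widetilde{x}\in\widehat{H}(\widetilde{b})$ (which, intersected with $\ker^0$, are either empty or translates of $\widehat{H}_0$ by an element of $\ker^0$). Hence all parameters can be taken from $\ker^0$. The induced structure $\widehat{T}^0$ is then a $\Q$-vector space equipped with the $\emptyset$-definable $\Q$-subspaces $\widehat{H}_0$, one for each $\emptyset$-definable connected $H\leq\G^n$; this is essentially a multi-sorted module, hence $\omega$-stable and $1$-based. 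The Morley-rank bound follows from the Noetherianity of the lattice of connected subgroups of $\G^n$ in the finite Morley rank group $\G^n$, which bounds the length of descending chains of $\widehat{H}_0$'s.

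For (ii), given $\widetilde{a}$, the tuple $\widehat{a}=(\rho_n(\widetilde{a}))_n$ lies in the $\G$-sort, so $\op{tp}(\widehat{a}/\widetilde{A})$ is internal to $\im(\rho)$. Any two realisations of $\op{tp}(\widetilde{a}/\widetilde{A}\widehat{a})$ share all $\rho_n$-images and so differ by an element of $\ker^0$, so this second step is internal to $\ker^0$ (with $\widetilde{a}$ itself as the internality parameter); pulling back yields analysability for $\widehat{T}^{\op{eq}}$. For (iii), by \corref{cor:forking} $\op{tp}(\widetilde{a}/c)$ forks over $\emptyset$ iff either $\op{tp}(a/c)$ does or $\op{grploc}(\widetilde{a}/c)\neq\op{grploc}(\widetilde{a}/\emptyset)$; for $\widetilde{a}\in\ker^0$ we have $a=0$, and a parameter $c\in\im(\rho)$ allows no translates in the $\widetilde{\G}$-sort so leaves the group locus unchanged. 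Hence no forking, so $\ker^0\perp\im(\rho)$.

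For (iv), by (ii) and (iii) any regular type in $\widehat{T}^{\op{eq}}$ is non-orthogonal to exactly one of the two orthogonal factors of the analysis. A regular type non-orthogonal to $\im(\rho)$ descends to a regular type in $T^{\op{eq}}$ and, since $T$ has finite Morley rank, to a strongly minimal one, giving (a). A regular type non-orthogonal to $\ker^0$ lies in the $\omega$-stable $1$-based module $\widehat{T}^0$ of (i); in such a module the regular types are the generics of simple sections of the $\emptyset$-definable subgroup lattice, and under the lattice correspondence from (i) these are exactly the $\quot{\widehat{G}_0}{\widehat{H}_0}$ with $H\leq G$ connected and no intermediate connected subgroup, giving (b). The principal technical effort lies in part (i)—identifying $\widehat{T}^0$ as an honest $1$-based module and bounding its Morley rank by that of $T$; once this is in place, (ii)--(iv) follow essentially by inspection of the QE together with the classical theory of regular types in modules and in finite Morley rank theories.
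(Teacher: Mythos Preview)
Your proposal is correct and follows essentially the same approach as the paper: use the QE to identify the induced structure on $\ker^0$ as purely abelian (hence stably embedded, $1$-based, with rank bounded by chain length) for (i), perform the two-step analysis $\widetilde{a}\mapsto\widehat{a}\mapsto 0$ for (ii), and read off (iii) and (iv) from the QE together with standard regular-type theory. The only minor variation is in (iii), where the paper argues via the absence of non-constant definable functions $(\ker^0)^n\to\im(\rho)$ while you invoke the forking characterisation of \corref{cor:forking}; both are immediate consequences of the QE.
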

\begin{proof}
    \begin{enumerate}[(i)]\item

	By the QE, the only structure on $\ker^0$ is the abelian structure given
        by the $\widehat{H}_0$. Stable embeddedness and 1-basedness follow easily. 
        (Stable embeddedness can alternatively be deduced directly from 
        stability of $\widehat{T}^0$.)

        Since $\ker^0$ is torsion-free and $\widehat{H}\cap \widehat{G} = \widehat{H'}$ where $H' = (H\cap G)^o$, the
	definable subgroups are precisely those of the form $\widehat{H}_0$. So there is
	is no infinite chain of definable subgroups of $\ker^0$, so $\widehat{T}^0$ is of
	finite Morley rank. The rank is bounded by the longest length of such a
	chain, which is bounded by the rank of $T$.

    \item This is immediate from the QE.

    \item
        Consider a strong type $q=\stp(\widetilde{a}/A)$, with $A \subseteq  \monst^{\eq}$.
        If $\widetilde{b}\in\widetilde{\monst}$ is a
	realisation of $q_1=\stp(\widetilde{a}/a\widetilde{A})$ independent from $\widetilde{a}$ over $A$,
	then since $\widehat{a} \subseteq  \acl(a)$, we have $\widetilde{a}-\widetilde{b} \in \ker^0$. So $q_1$ is
	internal to $\ker^0$, and clearly $\stp(a/A)$ is internal to
        $\im(\rho)$.

    \item It is immediate from the QE that every relatively definable subset
	of $(\ker^0)^n \times \im(\rho)^m$ is a Boolean combination of products
	of subsets of $(\ker^0)^n$ with subsets of $\im(\rho)^m$.

    \item
	By (i), the types in (b) are minimal, and $\ker^0$ is analysed in
	them. So this follows from (iii).
    \item
        For $\phi(x,y)$ an atomic formula, it is easy to see that any
        $\phi$-type over a model has canonical parameter in these sorts.
        So by the QE, any type over a model has canonical base in these sorts.
        By stability, the same holds for any type over an $\acleq$-closed set.
        Then if $\alpha = a/E \in \widetilde{\monst}^{eq}$, then $\alpha \in
        \Cb(a/\acleq(\alpha)) \subseteq  \acleq(\alpha)$.
    \end{enumerate}
\end{proof}

\section{Classification of models of $\widehat{T}$}
\label{sec:classification}

In this section, we prove the main model-theoretic result of this paper,
\thmref{thm:classification} below, which classifies the models of $\widehat{T}$.

\subsection{Outline}
\label{ssec:classificationOutline}
The classification proceeds as follows. First, recall the following coarse
version of the classification of models of $T$. By \cite[Theorem
6]{LascarFMRGrps}, $T$ is almost $\aleph_1$-categorical. It follows
(\cite[7.1]{BuechlerEssStab}) that if $M \vDash  T$ and $M_0 \prec  M$ is a copy of the
prime model, there is a finite set of mutually orthogonal strongly minimal
sets $D_i$ defined over $M_0$ such that $M$ is constructible and minimal over
$M_0B$, where $B$ is the union of arbitrary $\acl$-bases over $M_0$ for the
$D_i(M)$ (\cite[7.1.2(ii)]{BuechlerEssStab}).

We will show that this picture lifts to $\widehat{T}$. We will show that an arbitrary
model $\widetilde{M} \vDash  \widehat{T}$ is constructible and minimal over $\widetilde{M}_0B$ where $\widetilde{M}_0 =
\rho^{-1}(M_0)$, and $M_0 \prec  M$ and $B$ are as above. So models of $\widehat{T}$ are
determined up to isomorphism by a choice of model of $T$ and a choice of lift
of the prime model $M_0 \prec  M$ (which in particular involves a choice of
kernel).

In the case considered in the introduction, where $\G$ is an algebraic group
over $k_0$, we need just one strongly minimal set $D$, which we can take to be
an algebraically closed field with parameters for $k_0$. Then $M_0 \cong 
\G(k_0^{\alg})$, and for $\G(K) \vDash  T$, the basis $B$ is a transcendence base
for $K$ over $k_0^{\alg}$.

\subsection{Preliminaries}
We work in a monster model $\widetilde{\monst} \vDash  \widehat{T}$ and the corresponding monster
model $\monst=\rho(\monst) \vDash  T$.

However, we mostly want to consider only those elementary embeddings of models
of $\widehat{T}$ which preserve the kernel.

\begin{notation}
  For $\widetilde{M} \subseteq  \widetilde{N}$ models of $\widehat{T}$, we write $\widetilde{M} \prec ^* \widetilde{N}$ to mean that $\widetilde{M} \prec  \widetilde{N}$
  and $\ker(\widetilde{M}) = \ker(\widetilde{N})$. We refer to such elementary embeddings as
  {\em kernel-preserving}.
\end{notation}
\begin{remark} \label{rem:kerInvIm}
  If $\widetilde{M} \prec  \widetilde{N}$, then $\widetilde{M} \prec ^* \widetilde{N}$ iff $\widetilde{M} = \rho^{-1}(M) \subseteq  \widetilde{N}$, the inverse
  image of $M$ evaluated in $\widetilde{N}$.
\end{remark}
\begin{lemma} \label{lem:inverseStrong}
  If $\widetilde{N} \vDash  \widehat{T}$ and $M \prec  N = \rho(\widetilde{N})$, then $\widetilde{M} := \rho^{-1}(M) \prec ^* \widetilde{N}$.
\end{lemma}
\begin{proof}
  In light of Remark~\ref{rem:kerInvIm} and the quantifier elimination, it
  suffices to show that $\widetilde{M} \vDash  \widehat{T}$. For this, we check that the axioms
  \axref{ax:first}-\axref{ax:last} hold.
  These all follow straightforwardly from $M$ being an elementary
  submodel of $N$ and the kernel being preserved, except for the surjectivity
  in \axref{ax:proj} which is a little less straightforward. For that, with
  notation as in \dedref{ded:snake} of \propref{prop:QE}, note that
  $\rho_e(\widehat{H}(\widetilde{M})) = H(M) \subseteq  \pr(\rho_e(\widehat{G}(\widetilde{M})) = \rho_e(\pr(\widehat{G}(\widetilde{M}))$,
  so \begin{align*} \widehat{H}(\widetilde{M})
      &\subseteq  \pr(\widehat{G}(\widetilde{M})) + \ker(\rho_e|_{\widehat{H}(\widetilde{M})}) \\
      &=  \pr(\widehat{G}(\widetilde{M})) + e(\widehat{H}(\widetilde{M})\cap \ker) \\
      &=  \pr(\widehat{G}(\widetilde{M})) ,\end{align*}
  using that \dedref{ded:snake} holds for $\widetilde{N}$, and the kernel preservation.
\end{proof}

We make extensive use of l-isolation, a technique due to Lachlan
\cite{LachlanLIsol}.

\begin{definition}
    A type $p$ is l-isolated if for each $\phi(x,y)$ there exists $\psi(x) \in
    p$ such that $\psi$ implies the complete $\phi$-type implied by $p$, $\psi
    \vDash  p|_\phi$.
\end{definition}

Recall that $A$ is {\em atomic} over $B$ if $\tp(a/B)$ is isolated for each tuple
$a \in A$, and $A$ is {\em constructible} over $B$ if $A$ has an enumeration
$(a_i)_{i < \lambda}$ such that $\tp(a_i/Ba_{<i})$ is isolated for each
$i<\lambda$, where $Ba_{<i} = B \cup \{a_j \;|\; i<j\}$. We define {\em l-atomic} and
{\em l-constructible} by replacing isolation with l-isolation in these
definitions.

\begin{remark}
  This definition of l-isolation is easily seen to be equivalent to the
  $F^l_{\aleph_0}$-isolation of \cite[Definition~IV.2.3]{ShCT}.

  Clearly any isolated type is l-isolated, so atomicity implies l-atomicity
  and constructibility implies l-constructibility.

  It is easy to see that, just as for constructibility and atomicity in their
  usual senses, l-constructibility implies l-atomicity
  (\cite[Theorem~IV.3.2]{ShCT}), and the converse holds for countable
  sets (\cite[Lemma~IV.3.16]{ShCT}).
\end{remark}

\begin{lemma} \label{lem:lPrim}
    \begin{enumerate}[(a)]\item Work in a monster model $\monst'$ of a complete stable theory.
	\begin{enumerate}[(i)]\item l-constructible models exist over arbitrary sets: for $A \subseteq 
          \monst'^{\eq}$, there exists $M \prec  \monst'$ such that $A \subseteq  M^{\eq}$
          and $M^{\eq}$ is l-constructible over $A$.
        \item If $M \prec  \monst'$, and $\phi$ is a formula over $M$ such that
          $\phi(M) \subseteq  A \subseteq  \monst'^{\eq}$
	  and $\dcleq(A) \cap \dcleq(\phi(\monst')) \subseteq  M^{\eq}$,
		and if $b$ is l-isolated over $A$ and $\vDash  \phi(b)$,
		then $b\in \phi(M)$.
	\end{enumerate}
    \item If $\widetilde{M} \vDash  \widehat{T}$ and $\rho(\widetilde{M}) =: M \prec  N \vDash  T$, and $\widetilde{N} \vDash  \widehat{T}$ is l-atomic over $A:=\widetilde{M}\cup N$,
	then $\widetilde{N} \prec ^* \widetilde{M}$ and $\rho(\widetilde{N}) = N$, and so $\widetilde{N}$ is minimal over $A$.
    \end{enumerate}
\end{lemma}
\begin{proof}
    \begin{enumerate}[(a)]\item \mbox{ }
	\begin{enumerate}[(i)]\item \cite[IV.2.18(4), IV.3.1(5)]{ShCT}
	\item If $b\notin \phi(M)$, then by l-isolation, there is a formula
		$\psi \in \tp(b/A)$ such that
		    \[ \psi(x) \vDash  \phi(x)\wedge x\notin \phi(M) . \]
                By stable embeddedness of $\phi$, we may take $\psi$ to be
                defined over
		  $\dcleq(A) \cap \dcleq(\phi(\monst')) \subseteq  M^{\eq}$ .
		But then $\psi$ is realised in $M$, which is a contradiction.
	\end{enumerate}
    \item This follows from (a)(ii) and the QE.
      Indeed, if $\beta \in \dcleq(\widetilde{\zeta})$ with $\widetilde{\zeta}$ a tuple from
      $\ker(\widetilde{\monst})$, then since $\rho_n(\widetilde{\zeta}) \in \Tor \subseteq  M$, the QE implies
      that $\tp(\widetilde{\zeta}/A)$ is determined by $\tp(\widetilde{\zeta}/\widetilde{M})$. So if $\beta \in
      \dcleq(A)$, then already $\beta \in \widetilde{M}^{\eq}$. So by (a)(ii), $\ker(\widetilde{N})
      = \ker(\widetilde{M})$.

      Similarly, if $\beta \in \dcleq(b)$ with $b$ a tuple from $\monst$, 
      then the QE implies that $\tp(b/A)$ is determined by $\tp(b/N)$. Let $\widehat{N}
      \vDash  \widehat{T}$ be the profinite universal cover, embedded in $\widetilde{\monst}$ over $N$.
      Then $\tp(b/A)$ is determined by $\tp(b/\widehat{N})$, again, if $\beta \in
      \dcleq(A)$, then already $\beta \in \widehat{N}^{\eq}$. So by (a)(ii), we have
      $\rho(\widetilde{N}) = \rho(\widehat{N}) = N$.
      
      The claimed minimality is then clear, since $\rho$ is a homomorphism.
    \end{enumerate}
\end{proof}

We also use the existence of l-constructible models to obtain independent
amalgamation in the (abstract elementary) class $(\operatorname{Mod}(\widehat{T}),\prec ^*)$ of models
of $\widehat{T}$ with kernel-preserving embeddings.
\begin{lemma} \label{lem:amalg}
    Suppose $\widetilde{M}_i$, $i=0,1,2$, are elementary submodels of $\widetilde{\monst}$,
    $\widetilde{M}_0 \prec ^* \widetilde{M}_i$, and $\widetilde{M}_1 \ind _{\widetilde{M}_0} \widetilde{M}_2$. Let $\widetilde{M}_3$ be an
    l-atomic model over $\widetilde{M}_1 \cup \widetilde{M}_2$. Then $\widetilde{M}_i \prec ^* \widetilde{M}_3$.
\end{lemma}
\begin{proof}
    Suppose $\widetilde{\zeta} \in \ker(\widetilde{M}_3) \setminus \ker(\widetilde{M}_0)$.
    By l-atomicity, say $\phi(x,\widetilde{a}_1,\widetilde{a}_2) \in \tp(\widetilde{\zeta}/\widetilde{M}_1\cup \widetilde{M}_2)$ with
    $a_i \in \widetilde{M}_i$ and
        \[ \phi(x,\widetilde{a}_1,\widetilde{a}_2) \vdash  x \notin \ker(\widetilde{M}_0)\wedge x \in \ker .\]
    By \corref{cor:QE}, we may assume $\phi(x,\widetilde{a}_1,\widetilde{a}_2)$ is of the form
    $x \in \widehat{H}(\widetilde{a}_1,\widetilde{a}_2)\wedge \rho_n(x) = \zeta_n$, where $\zeta_n =
    \rho_n(\widetilde{\zeta}) \in M_0$.

    By the independence, $\tp(\widetilde{a}_1/\widetilde{M}_2)$ is finitely satisfiable in $\widetilde{M}_0$,
    so say $\widetilde{a}_1' \in \widetilde{M}_0$ and $\widetilde{M}_2 \vDash  \exists x \in \ker. \phi(x,\widetilde{a}_1',\widetilde{a}_2)$,
    witnessed say by $\widetilde{\zeta}' \in \ker(\widetilde{M}_2) = \ker(\widetilde{M}_0)$.
    
    Then $\tp(\widetilde{\zeta}-\widetilde{\zeta}'/\widetilde{M}_1) \ni (x \in \widehat{H}(\widetilde{a}_1-\widetilde{a}_1',0)\wedge \rho_n(x) = 0)$,
    so say $\widetilde{\zeta}'' \in \widetilde{M}_1$ also satisfies this.
    Then $\widetilde{\zeta}' + \widetilde{\zeta}'' \in \widehat{H}(\widetilde{a}_1'+\widetilde{a}_1-\widetilde{a}_1',\widetilde{a}_2+0) = \widehat{H}(\widetilde{a}_1,\widetilde{a}_2)$
    and $\rho_n(\widetilde{\zeta}'+\widetilde{\zeta}'') = \zeta_n + 0 = \zeta_n$; but $\widetilde{\zeta}' +
    \widetilde{\zeta}'' \in \ker(\widetilde{M}_1) = \ker(\widetilde{M}_0)$, contradicting the choice of $\phi$.
\end{proof}

\begin{lemma} \label{lem:atomCons}
    Suppose $\widetilde{M} \vDash  \widehat{T}$ and $A \subseteq  \widetilde{M}^{\eq}$ with $\ker(\widetilde{M}) \subseteq  A$, suppose $M$ is
    countable, and suppose $\widetilde{M}$ is atomic over $A$. Then $\widetilde{M}$ is
    constructible over $A$.
\end{lemma}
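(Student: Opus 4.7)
The plan is to use the countability of $M = \rho(\widetilde{M})$ together with the assumption $\ker(\widetilde{M}) \subseteq A$ to produce an explicit (l-)construction sequence for $\widetilde{M}$ over $A$ by enumerating $M$ and lifting.

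First I would enumerate $M = \{m_n : n < \omega\}$ and, for each $n$, fix a lift $\widetilde{m}_n \in \widetilde{M}$ with $\rho(\widetilde{m}_n) = m_n$. By (l-)atomicity of $\widetilde{M}$ over $A$, each finite tuple $(\widetilde{m}_0, \ldots, \widetilde{m}_n)$ has an (l-)isolated type over $A$, and consequently each $\operatorname{tp}(\widetilde{m}_n / A\widetilde{m}_0 \ldots \widetilde{m}_{n-1})$ is (l-)isolated by the general fact that (l-)isolation passes from $\operatorname{tp}(\bar b_1 \bar b_2 / A)$ to $\operatorname{tp}(\bar b_2 / A \bar b_1)$. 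For ordinary isolation this is trivial by substitution. For l-isolation one argues as follows: given a formula $\phi(\bar x_2, z, \bar b_1)$ over $A\bar b_1$, apply l-isolation of $\operatorname{tp}(\widetilde{m}_0, \ldots, \widetilde{m}_n / A)$ to the $A$-formula $\Phi(\bar x_1, \bar x_2, z) := \phi(\bar x_2, z, \bar x_1)$ to obtain an l-isolating $\Psi(\bar x_1, \bar x_2)$, and then check directly that $\Psi(\widetilde{m}_0, \ldots, \widetilde{m}_{n-1}, \bar x_2)$ l-isolates $\operatorname{tp}(\widetilde{m}_n / A\widetilde{m}_0 \ldots \widetilde{m}_{n-1})$ with respect to $\phi$.

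Second, I would observe that every remaining $\widetilde{a} \in \widetilde{M}$ satisfies $\rho(\widetilde{a}) = m_n$ for some $n$, and hence $\zeta := \widetilde{a} - \widetilde{m}_n$ lies in $\ker(\widetilde{M}) \subseteq A$. Thus $\widetilde{a}$ is defined over $A \cup \{\widetilde{m}_n\}$ by the formula $x = \widetilde{m}_n + \zeta$, so $\operatorname{tp}(\widetilde{a} / A \cup \{\widetilde{m}_k : k < \omega\})$ is isolated (and in particular l-isolated).

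Concatenating, the enumeration that first lists $\widetilde{m}_0, \widetilde{m}_1, \ldots$ and then enumerates the remaining elements of $\widetilde{M}$ in any order is a construction sequence witnessing that $\widetilde{M}$ is (l-)primary over $A$. The main — though rather mild — obstacle is the bookkeeping for the subtype-restriction of l-isolation sketched above; everything else is essentially forced by the countability of $M$ and the fact that the kernel has been absorbed into $A$.
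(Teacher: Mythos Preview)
Your proposal is correct and is essentially the paper's own argument spelled out in detail: the paper simply takes an arbitrary section $S$ of $\rho:\widetilde{M}\to M$, notes that $S$ is countable and (l-)atomic over $A$ hence (l-)primary, and then that $\widetilde{M}=S+\ker$ is primary over $S\cup A$. Your enumeration $\{\widetilde{m}_n\}$ is exactly such a section, and your second step is exactly the observation $\widetilde{M}=S+\ker$; the only extra content you provide is the explicit verification that l-isolation restricts along a finite extension of the base, which the paper takes for granted.
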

\begin{proof}
    Take an arbitrary section $S \subseteq  \widetilde{M}$ of $\rho : \widetilde{M} \rightarrow  M$. Then $S$ is
    countable and atomic, and hence constructible, over $A$, and $\widetilde{M} = S+\ker$
    is clearly constructible over $S\cup A \supseteq S\cup\ker$.
\end{proof}

\begin{lemma} \label{lem:isolLIsol}
    Suppose $\widetilde{B} \subseteq  \widetilde{M} \vDash  \widehat{T}$ and $\widetilde{a} \in \widetilde{M}$, and each $\tp(a_m/\widehat{B})$ is
    isolated. Then $\tp(\widetilde{a}/\widetilde{B})$ is l-isolated.
\end{lemma}
\begin{proof}
    By the QE, it suffices to see that $\tp_{\phi}(\widetilde{a}/\widetilde{B})$ is isolated for an
    atomic formula $\phi(x,y)$. For $\phi$ of the form
    $\psi(\rho_m(x),\rho_m(y))$, this follows from $\tp(a_m/\widehat{B})$ being
    isolated. For $\phi$ of the form $(x,y) \in \widehat{H}$, it follows from the fact
    that for $\widetilde{b} \in \widetilde{B}$, $(\widetilde{a},\widetilde{b}) \in \widehat{H} \Leftrightarrow  \grploc(\widetilde{a}/\widetilde{B}) \subseteq  \widehat{H}(\widetilde{b})$.
\end{proof}

\subsection{$\omega$-stability over models}
From now on, in order to prove the subsequent lemma, we make the following
additional assumption.

\begin{assumption}
    $T$ is {\em rigid} - for $\G$ a saturated model of $T$, every connected
    definable subgroup $H$ of $\G^n$ is defined over $\acleq(\emptyset )$ - and hence,
    by our previous assumptions in \ssecref{ssec:That}, is actually defined
    over $\emptyset $. So $\widehat{L}$ has a predicate $\widehat{H}$ corresponding to $H$.
\end{assumption}

We now apply the ``Kummer theory over models'' of \cite{BGHKg} to obtain
atomicity of ``finitely generated'' extensions of models.

\begin{lemma} \label{lem:wstab}
    Suppose $\widetilde{M} \prec  \widetilde{\monst}$ and $b \in \monst$,
    and let $M(b)$ be a prime model over $Mb$.
    Suppose $\widetilde{M}(b)$ is a model such that $\widetilde{M} \prec ^* \widetilde{M}(b) \prec  \widetilde{\monst}$ and
    $\rho(\widetilde{M}(b)) = M(b)$.
    Then $\widetilde{M}(b)$ is atomic over $\widetilde{M}b$. If $M$ is countable, $\widetilde{M}(b)$ is
    constructible over $\widetilde{M}b$.

    Furthermore, such an $\widetilde{M}(b)$ exists.
\end{lemma}
\begin{proof}
    We first show the atomicity.
    Let $\widetilde{c} \in \widetilde{M}(b)$; we must show that $\tp(\widetilde{c}/\widetilde{M}b)$ is isolated.
    Let $\widehat{H}+\widetilde{d} = \grploc(\widetilde{c}/\widetilde{M})$. Since $\widetilde{M}$ is a model, we may assume $\widetilde{d} \in
    \widetilde{M}$. So by replacing $\widetilde{c}$ with $\widetilde{c}-\widetilde{d}$, we may assume $\widetilde{d}=0$.

    Since $\widetilde{M}$ contains $\ker(\widetilde{M}(b))$ and $T$ is rigid, $c$ is {\em free} in $H$
    over $M$, i.e.\ in no proper coset defined over $M$. By \cite[6.4]{BGHKg},
    for some $n$, writing $\widehat{x}$ for the long tuple of variables $(x_i)_{i>0}$,
    \[ \tp(c_n/M)(x_n) \cup \{ x_i \in H \;|\; i>0 \} \vDash  \tp(\widehat{c}/M)(\widehat{x}). \]

    Now by $\omega$-stability of $T$, $\tp(b/Mc)$ has finite multiplicity,
    i.e.\ finitely many extensions to $\acleq(Mc) \supseteq \widehat{c}$. Hence $\tp(\widehat{c}/M) \cup
    \tp(c/Mb)$ has only finitely many extensions to Mb. So again, for some
    $n$,
    \[ \tp(c_n/Mb)(x_n) \cup \{ x_i \in H \;|\; i>0 \} \vDash  \tp(\widehat{c}/Mb)(\widehat{x}). \]

    So by \lemref{lem:typesKerPres},
	\[ \tp(c_n/Mb)(\rho_n(\widetilde{x})) \cup \{\widetilde{x} \in \widehat{H}\} \vDash  \tp(\widetilde{c}/\widetilde{M}b)(\widetilde{x}). \]
    But $c_n \in M(b)$, so $\tp(c_n/Mb)$ is isolated, so $\tp(\widetilde{c}/\widetilde{M}b)$ is
    isolated.

    This proves atomicity. Constructibility assuming countability of $M$
    follows by \lemref{lem:atomCons}.

    It remains to show existence. By \lemref{lem:lPrim}(a)(i), there exists a
    model $\widetilde{M}(b)$ which is l-constructible over $\widetilde{M} \cup M(b)$, and by
    \lemref{lem:lPrim}(b) the kernel is preserved and $\rho(\widetilde{M}(b)) = M(b)$.

    %
\end{proof}

\begin{remark}
    Note that $\widetilde{M}(b)$ will {\bf not} be constructible over $\widetilde{M} \cup M(b)$: indeed,
    if $\widetilde{a}\in \widetilde{M}(b) \setminus \widetilde{M}$, then each $a_n$ is in $M(b) \setminus M$, so easily
    $\tp(\widetilde{a}/\widetilde{M}\cup M(b))$ is not isolated.
\end{remark}

\begin{remark}
    If we don't assume rigidity, there could be subgroups definable over
    $M(b)$ which aren't definable over $M$, which could cause a failure of
    atomicity.
\end{remark}

\begin{remark}
    \lemref{lem:wstab} implies that we have $\omega$-stability over models in
    the (abstract elementary) class $(\operatorname{Mod}(\widehat{T}),\prec ^*)$,
    in the sense that if $\widetilde{M} \vDash  \widehat{T}$ is countable, then there are only
    countably many types over $\widetilde{M}$ realised in kernel-preserving extensions of
    $\widetilde{M}$. Indeed, by \lemref{lem:wstab} any such type is isolated over $\widetilde{M}b$
    for some $b$, and by $\omega$-stability of $T$ there are only countably
    many possible types $\tp(b/\widetilde{M}) \Dashv  \tp(b/M)$. We will see in
    \remref{rem:homog} below that the Galois type of $b$ over $\widetilde{M}$ is
    determined by $\tp(b/\widetilde{M})$, which means that we have $\omega$-stability
    over models in the sense of the abstract elementary class.
\end{remark}

\subsection{Independent systems}
Countability of $M$ was crucial to get constructibility in \lemref{lem:wstab}.
For constructibility of extensions in higher cardinals, we require
constructibility over independent systems of models.
\cite[XII]{ShCT} and \cite{HartOTOP} are the sources for the techniques used
here.

In this subsection, we develop what we need of the general theory of
independent systems. We work in a monster model $\monst'$ of an arbitrary
stable theory $T'$.

\begin{definition}
    If $I$ is a downward-closed set of sets, an \defnstyle{$I$-system} in
    $\monst'$ is a collection $(M_s \;|\; s \in I)$ of elementary submodels
    $\monst'$ such that for $s \subseteq  t$, $M_s$ is an elementary submodel of
    $M_t$. For $J \subseteq  I$, define $M_J := \Cup_{s\in J} M_s \subseteq  \monst'$.

    Define ${{<}s} := \P^-(s) := \P(s) \setminus \{s\}$,
    and ${{\not\geq }s} := I \setminus \{t \;|\; t \supseteq s\}$.

    The system is {\em constructible} if $M_s$ is constructible over $M_{<s}$ for
    all $s\in I$ with $|s|>1$. Similarly for {\em atomic}, and for
    {\em l-constructible} and {\em l-atomic}.

    The system is {\em independent} (or {\em stable}) if $M_s \ind _{M_{<s}} M_{\not\geq s}$
    for all $s\in I$.

    $I$ is {\em Noetherian} if each $s \in I$ is finite.

    An {\em enumeration} of $I$ is a sequence $(s_i)_{i\in\lambda}$ such that
    $I=\{s_i \;|\; i\in\lambda\}$ and $s_i \subseteq  s_j \Rightarrow  i \leq  j$. We then write
    $s_{<i}$ for $\{ s_j \;|\; j<i \}$.

    We define $|n| := \{0,\ldots,n-1\}$.
\end{definition}

Note that if $(s_i)_{i \in \lambda}$ is an enumeration of an independent
$I$-system, then we have $M_{s_i} \ind _{M_{<s_i}} M_{s_{<i}}$ for all $i$.
That the converse holds is given by the following Fact, which is
\cite[Lemma~XII.2.3(1)]{ShCT}.
\begin{fact} \label{fact:indieViaEnum}
  Let $(M_s)_s$ be an $I$-system, let $(s_i)_{i\in\lambda}$ be an enumeration,
  and suppose $M_{s_i} \ind _{M_{<s_i}} M_{s_{<i}}$ holds for all $i$. Then the
  system is independent.
\end{fact}

\begin{definition}
    Let $M$ be a (possibly multi-sorted) structure. If $A \subseteq  B \subseteq  M$, we say
    $A$ is {\em Tarski-Vaught} in $B$, $A \subseteq _{TV} B$, if every formula over $A$
    which is realised in $B$ is realised in $A$.
\end{definition}
\begin{lemma} \label{lem:TVAndLIsolation}
    Suppose $C \subseteq _{TV} B \subseteq  M$.
    \begin{enumerate}[(i)]\item
        If a type $\tp(a/C)$ is l-isolated, then $\tp(a/C) \vDash  \tp(a/B)$,
        and $Ca \subseteq _{TV} Ba$.
    \item
	If $A \subseteq  M$ is constructible over $C$ then $A$ is constructible over $B$.
    \item
	If $A \subseteq  M$ is l-atomic over $C$, then $A \ind _C B$.
    \end{enumerate}
\end{lemma}
\begin{proof}
    \begin{enumerate}[(i)]\item
      Given $\phi(x,y)$, say $\psi(x) \in \tp(a/C)$ isolates $\tp_\phi(a/C)$.
      Then for $b \in B$, $\phi(x,b) \in \tp_\phi(a/B)$ iff $\psi(x) \vDash 
      \phi(x,b)$;
      indeed, else
      \[ \vDash  (\exists x. \psi(x)\wedge \phi(x,b))
       \wedge (\exists x. \psi(x)\wedge \neg\phi(x,b)) ;\]
      but then the same holds for some $c \in C$, contradicting the isolation.

      So $\tp(a/C) \vDash  \tp(a/B)$.
      Also $Ca \subseteq _{TV} Ba$, since if $\vDash  \phi(a,b)$ then $\vDash  \forall x.
      \psi(x) \rightarrow  \phi(x,b)$, hence this holds for some $c \in C$, and hence $\vDash 
      \phi(a,c)$.
    \item
        This follows from (i) by a transfinite induction.
    \item
        The extension of $\tp(A/C)$ to $B$ is unique by (i), so must be the
        non-forking extension.
    \end{enumerate}
\end{proof}

\begin{lemma} \label{lem:TVAndCoheirs}
    Suppose $M$ is a model, and $A \ind _M B$. Then $MA \subseteq _{TV} MAB$.
\end{lemma}
\begin{proof}
    By the coheir property of non-forking over models in stable theories,
    $\tp(B/MA)$ is finitely satisfiable in $M$.
\end{proof}

The following is \cite[Lemma~XII.2.3(2)]{ShCT}, to which we refer for the proof.
\begin{fact}[TV Lemma] \label{fact:TVLemma}
    If $(M_s)_s$ is an independent $I$-system in a stable theory, if $J \subseteq  I$,
    and if $\forall s\in I. (s \subseteq  \Cup J \Rightarrow  s \in J)$, then $M_J \subseteq _{TV} M_I$.
\end{fact}

\begin{lemma} \label{lem:sysPrimOverBasis}
    Let $(M_s)_s$ be a constructible Noetherian independent $I$-system.
    Suppose that for each $p \in \Cup I$, $B_p$ is a subset of $M_{\{p\}}$ for
    which $M_{\{p\}}$ is constructible over $M_{\emptyset }B_p$.

    Then $M_I$ is constructible over $M_{\emptyset }\cup\Cup_{p\in \Cup I}B_p =: A$.
\end{lemma}
\begin{proof}
  Let $(s_i)_{i<\lambda}$ be an enumeration of $I$. It suffices to show that
  each $M_{s_i}$ is constructible over $AM_{s_{<i}}$, as it then follows
  by induction on $i\leq \lambda$ that $M_{s_{<i}}$ is constructible over
  $A$.

  If $s_i = \emptyset $, the constructibility is immediate.
  If $s_i = \{p\}$, we have $B_p \ind _{M_{\emptyset }} (M_{s_{<i}}\cup(A \setminus B_p))$.
  So by \lemref{lem:TVAndCoheirs},
  $M_{\emptyset }B_p \subseteq _{TV} AM_{s_{<i}}$.
  The desired constructibility then follows from
  \lemref{lem:TVAndLIsolation}(ii).

  If $|s_i|>1$, then $M_{s_i}$ is constructible over $M_{<s_i}$; but $M_{<s_i}
  \subseteq _{TV} M_{s_{<i}\cup \{\{p\} \;|\; p \in \Cup I\}}$ by the TV Lemma, so in
  particular $M_{<s_i} \subseteq _{TV} AM_{s_{<i}}$. Again,
  \lemref{lem:TVAndLIsolation}(ii) yields the desired constructibility.
\end{proof}
    
\begin{lemma} \label{lem:primIndieSys}
    Suppose $\Cup I$ is finite.

    For an l-atomic $I$-system to be independent,
    it suffices that for each $p \in \Cup I$,
	\[ M_{\{p\}} \ind _{M_\emptyset } M_{\not\geq \{p\}}. \]
\end{lemma}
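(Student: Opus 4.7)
The forward direction is immediate: specialising the independence condition to each singleton $s=\{i\}$, for which $M_{<\{i\}}=M_\emptyset$, yields the stated hypothesis. The content lies in the converse.

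For the converse, my plan is to prove by induction on $|J|$ the strengthening that for every downward-closed $J\subseteq I$, the subsystem $(M_s)_{s\in J}$ is independent in the natural sense ($M_s\nonfork_{M_{<s}} M_{\{u\in J\,:\,u\not\supseteq s\}}$ for all $s\in J$); taking $J=I$ recovers the lemma. In the inductive step, pick a maximal $s\in J$ and set $J':=J\setminus\{s\}$, which is still downward closed. By the inductive hypothesis $(M_t)_{t\in J'}$ is independent, which in particular makes the TV Lemma available for it. I first verify the condition at $s$, namely $M_s\nonfork_{M_{<s}} M_{J'}$: when $|s|\geq 2$, applying the TV Lemma with the sub-index set being the proper subsets of $s$ (which satisfies the required downward-closure-up-to-absorption condition within $J'$) yields $M_{<s}\subseteq_{TV} M_{J'}$, whereupon l-atomicity of $M_s$ over $M_{<s}$ (from l-primariness) combined with Lemma~\ref{lem:TVAndLIsolation}(iii) delivers the nonforking; when $|s|=1$, maximality of the singleton $\{i\}$ in $J$ forces $J'\subseteq\not\geq\{i\}$, so the given hypothesis plus monotonicity does the job.

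It remains to verify $M_t\nonfork_{M_{<t}} M_{\{u\in J\,:\,u\not\supseteq t\}}$ for each $t\in J'$. If $t\subseteq s$, then $s$ is excluded from the right-hand index set and the claim reduces to the inductive hypothesis. The main obstacle is the case $t\not\subseteq s$, where adding $M_s$ to the right-hand side must not create forking. I plan to set $K_t:=\{u\in J'\,:\,u\not\supseteq t\}$ and exploit the inclusion $<s\subseteq K_t$, which holds precisely because $t\not\subseteq s$ forces every subset of $s$ to fail to contain $t$; together with the trivial $<t\subseteq K_t$, this will let me transform the already-established $M_s\nonfork_{M_{<s}} M_{J'}$ into $M_s\nonfork_{M_{K_t}} M_t$ by restricting the right side to $M_{K_t}M_t$, enlarging the base via $M_{<s}\subseteq M_{K_t}$, and applying transitivity. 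Symmetrising to $M_t\nonfork_{M_{K_t}} M_s$ and combining with the inductive hypothesis $M_t\nonfork_{M_{<t}} M_{K_t}$ by one more transitivity step then yields the required $M_t\nonfork_{M_{<t}} M_{K_t}M_s$, completing the induction.
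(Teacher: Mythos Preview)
Your argument is correct and follows essentially the same strategy as the paper: induct over downward-closed subsets, and at a maximal $s$ with $|s|\geq 2$ combine the TV~Lemma (applied to the independent $(J\setminus\{s\})$-system from the inductive hypothesis) with l-atomicity via Lemma~\ref{lem:TVAndLIsolation}(iii), while the singleton case reduces to the stated hypothesis by monotonicity.

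The only organisational difference is in how non-maximal indices are handled. The paper's proof simply asserts that it suffices to check the condition at maximal $s\in I$, leaving implicit that for a non-maximal $s$ one may apply the inductive hypothesis to the proper downward-closed set $J=I_{\not\geq s}\cup\mathcal P(s)$, in which $s$ is maximal and $J_{\not\geq s}=I_{\not\geq s}$. You instead fix a single maximal $s$, peel it off, and verify the condition at every remaining $t$ directly: trivially when $t\subsetneq s$, and when $t\not\subseteq s$ by the symmetry--transitivity forking computation you give (which is correct; your key observations ${<s}\subseteq K_t$ and ${<t}\subseteq K_t$ hold exactly as you claim). This is a little longer but entirely self-contained, whereas the paper's version is terser but requires the reader to supply the right $J$.
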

\begin{proof}
    Suppose inductively that for any downward closed proper subset $J$ of $I$, the
    restriction of the $I$-system to a $J$-system is independent.

    So it suffices to show that for $s \in I$ maximal, $M_s \ind _{M_{<s}} M_{I \setminus \{s\}}$.

    If $|s| = 1$, this holds by assumption.

    If $|s|>1$, then if $t\subseteq s$ and $t \in I \setminus \{s\}$ then $t \in {{<}s}$, so by the TV Lemma
    applied to the restricted independent $(I \setminus \{s\})$-system,
	\[ M_{<s} \subseteq _{TV} M_{I \setminus \{s\}}. \]

    But $M_s$ is l-atomic over $M_{<s}$, so we conclude the independence by
    \lemref{lem:TVAndLIsolation}(iii).
\end{proof}

\subsection{Atomicity over independent systems in $\widehat{T}$}
Now we return to considering $\widehat{T}$ and $T$.

Let $M \vDash  T$, and let $M_0 \prec  M$ be a copy of the prime model, and let $D_i$
and $B_i$ be as in \ssecref{ssec:classificationOutline}.
Let $B:=\Cup_i B_i$, and let $\P^{\fin}(B)$ be the set of finite subsets of
$B$. Let $M_{\emptyset }=M_0$, and for $s\in \P^{\fin}(B)$ inductively let $M_s \prec  M$
be prime over $M_{<s} \cup s$.

\begin{lemma} \label{lem:baseSys}
    $(M_s)_{s \in \P^{\fin}(B)}$ is a constructible independent $\P^{\fin}(B)$-system, and $\Cup_s M_s = M$.
\end{lemma}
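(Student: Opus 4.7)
The plan is to verify the three assertions of the lemma---primariness of the system, independence, and the coverage $\bigcup_s M_s = M$---in turn. Two external facts will be used: that $M$ is primary over $M_0 \cup B$ (by Lascar's theorem on almost $\aleph_1$-categorical groups, as recalled at the start of this section), and that $B$ is $\operatorname{acl}$-independent over $M_0$ (since the $D_i$ are pairwise orthogonal strongly minimal sets and each $B_i$ is a basis for $D_i(M)$), giving $b \nonfork _{M_0} (B \setminus \{b\})$ for each $b \in B$.

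Primariness should be essentially immediate. The key observation is that for $|s|\geq 2$, one has $s \subseteq M_{<s}$: for $b \in s$, picking any $b' \in s\setminus\{b\}$ (which exists since $|s|\geq 2$), we have $s\setminus\{b'\} \in {<s}$ and $b \in s\setminus\{b'\} \subseteq M_{s\setminus\{b'\}} \subseteq M_{<s}$. Hence $M_s$, being prime over $M_{<s}\cup s = M_{<s}$, is primary over $M_{<s}$, which is what the definition of primariness of the system requires.

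For independence, I will first reduce to finite downward-closed sub-systems $I_0 \subseteq \P^{\operatorname{fin}}(B)$ using the finite character of non-forking. Since $\bigcup I_0$ is then finite, \lemref{lem:primIndieSys} reduces the task to checking $M_{\{b\}} \nonfork _{M_0} M_{J_b}$ for each $b \in \bigcup I_0$, where $J_b := \{t \in I_0 : b \notin t\}$ (note $J_b$ is downward-closed). Here the argument uses the dominance property of prime models in $\omega$-stable theories: $M_{\{b\}}$ is dominated by $b$ over $M_0$, and by induction along $J_b$ (using at each step that $M_t$ is prime over $M_{<t}\cup t$ and hence dominated by $t$ over $M_{<t}$), $M_{J_b}$ is dominated over $M_0$ by the finite set $\bigcup J_b \subseteq B \setminus \{b\}$. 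This reduces independence to $b \nonfork _{M_0} \bigcup J_b$, which follows from the $\operatorname{acl}$-independence of $B$.

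Coverage will use our freedom in choosing each $M_s$ as a copy of the prime model over $M_{<s}\cup s$ inside $M$. Taking a construction sequence for $M$ over $M_0\cup B$ witnessing its primariness, each element's isolating formula uses only finitely many parameters from $B$; processing the construction in an order compatible with a fixed enumeration of $\P^{\operatorname{fin}}(B)$, and assigning each new element to some $M_s$ whose index contains the relevant parameters, will yield $\bigcup_s M_s = M$. The main obstacle is the independence step---specifically the iterated application of dominance along the sub-system construction---while primariness and coverage are comparatively routine.
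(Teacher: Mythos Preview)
Your primariness paragraph is fine; the paper leaves this implicit.

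The independence argument has a real gap. You claim that ``by induction along $J_b$, using at each step that $M_t$ is dominated by $t$ over $M_{<t}$, one gets that $M_{J_b}$ is dominated by $\bigcup J_b$ over $M_0$''. But these dominations do not compose: the domination of $M_t$ is over $M_{<t}$, not over $M_0$ or over the union $M_{t_{<i}}$ of the previously treated pieces, and $M_{<t}$ is not even a model a priori. To transfer domination from $M_{<t}$ to the larger base $M_{t_{<i}}$ you would need exactly the independence $M_t \nonfork_{M_{<t}} M_{t_{<i}}$ that you are trying to prove. The paper avoids this circularity by structuring the induction differently: one inducts on $|s|$, taking as inductive hypothesis that the restricted $\P(s)$-system is independent; then \lemref{lem:sysPrimOverBasis} gives that $M_s$ is \emph{primary over $M_0 s$}. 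From this one argues directly that $b \nonfork_{M_0} M_s$ (since $\operatorname{tp}(b/M_0 s)$ is non-algebraic, hence non-isolated, so $b \notin M_s$; and $b$ lies in a strongly minimal set), and finally $M_0 b \subseteq_{TV} M_s b$ together with atomicity of $M_{\{b\}}$ over $M_0 b$ gives $M_{\{b\}} \nonfork_{M_0} M_s$. Your domination idea is salvageable if you insert this use of \lemref{lem:sysPrimOverBasis}: once $M_s$ is primary over $M_0 s$ with $M_0$ a model, the standard fact ``prime models are dominated'' does give that $s$ dominates $M_s$ over $M_0$, and then your deduction from $b \nonfork_{M_0} s$ goes through.

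For coverage your plan is unnecessarily elaborate and arguably proves a weaker statement: the $M_s$ have already been fixed before the lemma, so you are not free to reconstruct them from a construction sequence for $M$. The paper's argument is one line: $\bigcup_s M_s$ is a directed union of elementary substructures of $M$ and hence an elementary substructure of $M$ containing $M_0 B$; since $M$ is minimal over $M_0 B$, equality follows.
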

\begin{proof}
    $\Cup_s M_s$ is an elementary submodel of $M$ which contains $M_0B$, and
    $M$ is minimal over $M_0B$, so $\Cup_s M_s = M$.

    The system is constructible by construction, prime models being
    constructible in $\omega$-stable theories.
    For independence, by finite character of forking and
    \lemref{lem:primIndieSys} it suffices to see that $M_{\{b\}} \ind _{M_0} M_s$
    when $b \notin s \in \P^{\fin}(B)$.

    We may assume inductively that the restriction of the system to $s$ is
    independent. So by \lemref{lem:sysPrimOverBasis}, $M_s$ is constructible over $M_0s$.

    Now $b \notin M_s$ since (by orthogonality of the $D_i$) $\tp(b/M_0s)$ is not
    algebraic and hence not isolated.

    So $bM_0 \ind _{M_0} M_s$. So by \lemref{lem:TVAndCoheirs}, $bM_0 \subseteq _{TV} bM_s$.
    Since $M_{\{b\}}$ is constructible and hence atomic over $bM_0$, it follows by
    \lemref{lem:TVAndLIsolation}(iii) that $M_{\{b\}} \ind _{bM_0} bM_s$, and in
    particular $M_{\{b\}} \ind _{bM_0} M_s$.
    So by transitivity, $M_{\{b\}} \ind _{M_0} M_s$.
\end{proof}

\begin{definition}
    An $I$-$\sim$-system is an $I$-system $(\widetilde{M}_s)_s$ in $\widetilde{\monst} \vDash  \widehat{T}$ such that
    \begin{itemize}\item setting $M_s := \rho(\widetilde{M}_s) \vDash  T$,
	$(M_s)_s$ is an independent atomic $I$-system in $T$;
    \item $\widetilde{M}_s \prec ^* \widetilde{M}_t$ when $s \subseteq  t$.
    \end{itemize}
\end{definition}

The definition assumes only independence in $T$, but independence in $\widehat{T}$
follows:
\begin{lemma} \label{lem:tildeSysIndie}
    An $I$-$\sim$-system $(\widetilde{M}_s)_s$ is an independent $I$-system.
\end{lemma}
\begin{proof}
    Let $(s_i)_{i\in\lambda}$ be an enumeration of $I$. By
    \factref{fact:indieViaEnum}, it suffices to show that given
    $i\in\lambda$, we have $\widetilde{M}_{s_i} \ind _{\widetilde{M}_{<s_i}} \widetilde{M}_{s_{<i}}$, where we
    may assume inductively that the restriction of $(\widetilde{M}_s)_s$ to $s_{<i}$ is an
    independent system.

    By \propref{prop:forking}(ii) and the independence of $(M_s)_s$, it suffices to
    show that for $\widetilde{a} \in \widetilde{M}_{s_i}$, we have $C := \grploc(\widetilde{a}/\widetilde{M}_{s_{<i}})$ is
    defined over $\widetilde{M}_{<s_i}$.
    Say $C = \widehat{H}(\widetilde{b}')$ with $\widetilde{b}' \in \widetilde{M}_{s_{<i}}$.

    Now $aM_{<s_i} \subseteq _{TV} aM_{s_{<i}}$, by the TV Lemma
    (\factref{fact:TVLemma}) and \lemref{lem:TVAndLIsolation}(i) if $|s_i|>1$,
    and by \lemref{lem:TVAndCoheirs} if $|s_i| = 1$.
    
    So $H(b') = H(0) + a = H(b)$ for some $b \in M_{<s_i}$. So say
    $\widetilde{b} \in \widetilde{M}_{<s_i}$ and $\rho(\widetilde{b})=b$; then $\widetilde{a} + \zeta \in \widehat{H}(\widetilde{b})$ for some
    $\zeta \in \ker(\widetilde{M}_{s_i})=\ker( \widetilde{M}_{\emptyset } )$. So $\widehat{H}(\widetilde{b}') = \widehat{H}(\widetilde{b}) - \zeta$,
    which (by \remref{rem:grplocgrp}) is defined over $\widetilde{M}_{<s_i}$.
\end{proof}

\begin{proposition} \label{prop:indieAtomic}
    Let $(\widetilde{M}_s)_s$ be an $I$-$\sim$-system with $I$ Noetherian.
    Then the system is atomic.
    If also each $M_s$ is countable, then the system is constructible.
\end{proposition}
\begin{proof}
    It suffices to show this for $I=\P(|n|)$, $n>1$, where recall
    $|n\vDash \{0,\ldots,n-1\}$. Indeed, by Noetherianity,
    the system below any $s \in I$ is of this form. We inductively assume the
    result for $1<n'<n$.

    We show that $\widetilde{M}_{|n|}$ is atomic over $\widetilde{M}_{<|n|}$. Constructibility
    assuming countability then follows by \lemref{lem:atomCons}.

    \begin{claim}
	$(\widetilde{M}_s)_s$ extends to a $\P(|n+1|)$-$\sim$-system such that $\widetilde{M}_{|n|}$ is isomorphic
        over $\widetilde{M}_{|n-1|}$ to $\widetilde{M}_{ |n-1| \cup \{n\} }$, by an isomorphism
        $\sigma$ such that moreover $\sigma(\widetilde{M}_s) = \widetilde{M}_{(s \setminus \{n-1\}) \cup
        \{n\}}$ for $s \subseteq  |n|$.
    \end{claim}
    \begin{proof}
	Let $t := |n-1| \cup \{n\}$.

    \[
    \xymatrix{
	                         &  & \{3\}\ar@{-}[llddd] \ar@{-}[dd] \ar@{-}[rrddd] &  & |4| \\
	 t \ar@/^/[rr]+/ld 1cm/  &  &                                                &  & |3| \ar@/_/[lld]+/d 1cm/ \\
	                         &  & \{1\}\ar@{-}[lld]^{|2|} \ar@{-}[rrd]           &  & \\
	\{0\} \ar@{-}[rrrr]      &  &                                                &  & \{2\}
    } \]

        Let $\widetilde{M}_t$ be a realisation of $\tp(\widetilde{M}_{|n|}/\widetilde{M}_{|n-1|})$ independent
        from $\widetilde{M}_{|n|}$, and let $\sigma : \widetilde{M}_{|n|} \xrightarrow{\cong}_{\widetilde{M}_{|n-1|}} \widetilde{M}_t$
        be an isomorphism witnessing the equality of types. Let $\widetilde{\M}$ be an
        l-atomic model over $\widetilde{M}_{|n|} \cup \widetilde{M}_t$. By \lemref{lem:amalg},
        $\ker(\widetilde{\M}) = \ker(M_\emptyset )$.

	We define an enumeration $s_i$ of $\P(|n+1|)$, and recursively define
        $\widetilde{M}_{s_i} \prec ^* \widetilde{\M}$ such that
	    \[ M_{s_i} \ind _{M_{<s_i}} M_{s_{<i}} \]
	and $M_{s_i}$ is atomic over $M_{<s_i}$.
        By \factref{fact:indieViaEnum}, this will yield a
        $\P(|n+1|)$-$\sim$-system.

	Begin with an enumeration of $\P(|n|)$; the corresponding
        $\widetilde{M}_{s_i} \prec ^* \widetilde{\M}$ are already given.

	Continue with an enumeration of $\P(t)$, setting
        $\widetilde{M}_{s_i} := \sigma( \widetilde{M}_{ (s_i \setminus \{n\}) \cup \{n-1\} } )
        \prec ^* \widetilde{M}_t \prec ^* \widetilde{\M}$.
        For the independence condition, we have
        $M_{s_i} \ind _{M_{<s_i}} M_{s_{<i} \cap \P(t)}$ since $s_i$ is part of
        an enumeration of $\P(t)$, and then by transitivity and $M_t
        \ind _{M_{|n-1|}} M_{|n|}$ we deduce $M_{s_i} \ind _{M_{<s_i}} M_{s_{<i}
        \cap \P(t)}M_{|n|}$ and hence $M_{s_i} \ind _{M_{<s_i}} M_{s_{<i}}$.

        Now for the remaining $s_i$: let $M'_{s_i} \prec  \M$ be a constructible
        model over $M_{<s_i} \subseteq  \M$, and let $\widetilde{M}_{s_i}$ be the inverse
        image in $\widetilde{\M}$. The TV Lemma (\factref{fact:TVLemma}) gives $M_{<s_i}
        \subseteq _{TV} M_{s_{<i}}$, so $M_{s_i} \ind _{M_{<s_i}} M_{s_{<i}}$ by
        \lemref{lem:TVAndLIsolation}(iii).
    \end{proof}

    \newcommand{\D}{\Delta}
    \renewcommand{\d}{\operatorname{d}}
    \renewcommand{\L}{\Lambda}
    Define
    \begin{align*}
      \widetilde{\D}       & := \widetilde{M}_{|n|} &
      \widetilde{\D}'      & := \widetilde{M}_{|n+1|} \\
      \d_i\widetilde{\D}   & := \widetilde{M}_{|n| \setminus \{i-1\}} &
      \d_i\widetilde{\D}'  & := \widetilde{M}_{|n+1| \setminus \{i-1\}} \\
      \d\widetilde{\D}     & := \Cup_{1\leq i\leq n} \d_i\widetilde{\D} &
      \d\widetilde{\D}'    & := \Cup_{1\leq i\leq n} \d_i\widetilde{\D}' \\
      \widetilde{\L}       & := \Cup_{1\leq i<n} \d_i\widetilde{\D} &
      \widetilde{\L}'      & := \Cup_{1\leq i<n} \d_i\widetilde{\D}' \\
      \d\d_i\widetilde{\D} & := \Cup_{j\in |n| \setminus \{i-1\}} \widetilde{M}_{|n| \setminus \{i-1,j\} } \\
    \end{align*}

    We also define the corresponding sets in $T$, e.g.\ $\L := \rho(\widetilde{\L}) =
    \Cup_{i<n-1} M_{|n| \setminus \{i\}}$.

    In this notation, the isomorphism of the previous claim is
	\[ \sigma : \widetilde{\D} \xrightarrow{\cong}_{\d_n\widetilde{\D}} \d_n\widetilde{\D}' .\]
    Note that it induces an isomorphism 
	\[ \sigma : \D \xrightarrow{\cong}_{\d_n\D} \d_n\D' .\]

    A diagram for $n=3$:
    \[
    \xymatrix{
	                                &  & \widetilde{M}_{\{3\}}\ar@{-}[llddd] \ar@{-}[dd] \ar@{-}[rrddd] &  & \widetilde{\D}'=\widetilde{M}_{|4|} \\
	 \d_n\widetilde{\D}' \ar@/^/[rr]+/ld 1cm/  &  &                                                     &  & \widetilde{\D}=\widetilde{M}_{|3|} \ar@/_/[lld]+/d 1cm/ \\
	                                &  & \widetilde{M}_{\{1\}}\ar@{-}[lld]^{\d_n\widetilde{\D}} \ar@{--}[rrd]      &  & \\
	\widetilde{M}_{\{0\}} \ar@{--}[rrrr]       &  &                                                     &  & \widetilde{M}_{\{2\}}
    } \]
    the dashed lines indicate $\widetilde{\L}$, and the faces above them form $\widetilde{\L}'$.

    Let $\widetilde{a} \in \widetilde{\D}$ be a tuple; we want to show that $\tp(\widetilde{a}/\d\widetilde{\D})$ is
    isolated.

    \begin{claim}
	There exists $b_0\in \d_n\D$ such that, setting $A := \acleq(\d\d_n\D b_0)$,
	\[ \tp(\widehat{a}/A\L) \vDash  \tp(\widehat{a}/\d\D). \]
    \end{claim}
    \begin{proof}
	Let $b_0\in \d_n\D$ such that $\tp(a/\d\D) \Dashv  \tp(a/b_0\L)$. First note that every
        extension of $\tp(a_m/b_0\L)$ to $\d\D$ is a non-forking extension.
        Indeed, that holds for $m=1$ by the uniqueness of the extension,
        and hence for any $m$ by interalgebraicity of $a_m$ with $a$.
	So it suffices to see that $\tp(a_m/A\L)$ has a unique non-forking
	extension to $\d\D$. So suppose $c_1,c_2$ realise two such extensions. Then
	$\d_n\D \ind _{A\L} c_i$. Now $\tp(\d_n\D/A)$ is stationary, and since
        the system is independent we have $\d_n\D \ind _{\d\d_n\D} \L$
        and hence $\d_n\D \ind _A A\L$
        , also $\tp(\d_n\D/A\L)$ is stationary. So $c_1 \equiv _{\d_n\D A\L} c_2$,
        so in particular $c_1 \equiv _{\d\D} c_2$,
    \end{proof}
    
    \begin{claim}
	\[ \tp(\widehat{a} / \sigma(\widehat{a}) \L' b_0) \vDash  \tp(\widehat{a} / A\L) \]
    \end{claim}
    \begin{proof}
	Say $\vDash  \phi(a_n,b,e)$ where $b\in A$ and $e\in \L$.

	Say $\theta$ is an algebraic formula isolating $\tp(b/\d\d_n\D b_0)$.

	Let
	    \[ \psi(x) := \forall y\in\theta. (
		\phi(x,y,e) \Leftrightarrow  \phi(\sigma a_n,y,\sigma e) ) , \]
	which is a formula over $\sigma(a_n) \L' b_0$
        since $\sigma e \in \sigma\L \subseteq  \L'$.

	Then $\psi(x) \vDash  \phi(x,b,e)$, since $\vDash  \phi(\sigma a_n,b,\sigma e)$,
	since $b\in \d_n\D$ and $\sigma : \D \xrightarrow{\cong}_{\d_n\D} \d_n\D'$, and
        similarly $\psi(x) \in \tp(a_n / \sigma(a_n) \L' b_0)$.
        So $\tp(\widehat{a} / \sigma(\widehat{a}) \L' b_0) \vDash  \phi(a_n,b,e)$.
    \end{proof}

    Now $\d\widetilde{\D} \subseteq _{TV} \d\widetilde{\D}'$ by the TV lemma, and $\tp(\widetilde{a}/\d\widetilde{\D})$ is
    l-isolated by \lemref{lem:isolLIsol}, so by
    \lemref{lem:TVAndLIsolation}(i),
    $\tp(\widetilde{a} / \d\widetilde{\D}) \vDash  \tp(\widetilde{a} / \d\widetilde{\D}')$.

    Let $\widetilde{b}_0\in \rho^{-1}(b_0) \subseteq  \d_n\widetilde{\D}$, and let $\widetilde{b}_0 \subseteq  \widetilde{b}_0' \in \d_n\widetilde{\D}$
    be such that $\grploc(\widetilde{a}/\d\widetilde{\D})$ is defined over $\widetilde{b}_0'\widetilde{\L}$. Then by
    \lemref{lem:typesKerPres} and the above Claims, we have:
	\[ \tp(\widetilde{a} / \d\widetilde{\D}) \Dashv \vDash  \tp(\widetilde{a} / \sigma(\widetilde{a}) \widetilde{\L}' \widetilde{b}_0'). \]

    So it suffices to see that the latter type is isolated.

    If $n>2$, we have that $\tp(\widetilde{a}\sigma(\widetilde{a})\widetilde{b}_0'/\widetilde{\L}')$ is isolated by
    the inductive hypothesis applied to the $\P(|n-1|)$-$\sim$-system $(\widetilde{M}'_s)_s$
    defined by $\widetilde{M}'_s := \widetilde{M}_{s\cup\{n-1,n\}}$, since $\widetilde{\L}' = \widetilde{M}'_{<|n-1|}$ and
    $\widetilde{M}'_{|n-1|} = \widetilde{M}_{|n+1|} = \widetilde{\D}'$.

    Finally, if $n=2$, we claim that it follows from \lemref{lem:wstab} that
    $\tp(\widetilde{a}\sigma(\widetilde{a})\widetilde{b}_0'/\widetilde{\L}'b_0')$ is isolated. Indeed,
    $\widetilde{\L}'=\widetilde{M}_{\{1,2\}}$, and so it suffices to show that
    $\tp(a,\sigma(a)/M_{\{1,2\}}b_0')$ is isolated, since then for an
    appropriate embedding of the prime model $M_{\{1,2\}}(b_0')$ into $\D'$,
    we have $a,\sigma(a)\in M_{\{1,2\}}(b_0')$.

    We conclude by proving this isolation of
    $\tp(a,\sigma(a)/M_{\{1,2\}}b_0')$. By the definitions of $b_0$ and
    $b_0'$, we have that $\tp(a/b_0'M_{\{1\}})$ implies
    $\tp(a/M_{\{0\}}M_{\{1\}})$ and so is isolated, and hence by
    $M_{\{0,1\}} \ind _{M_{\{1\}}} M_{\{1,2\}}$, also $\tp(a/b_0'M_{\{1,2\}})$ is isolated. Applying $\sigma$, also
    $\tp(\sigma(a)/b_0'M_{\{2\}})$ is isolated, and, applying the TV Lemma and
    \lemref{lem:TVAndLIsolation}(i),
    \begin{align*} \tp(\sigma(a)/b_0'M_{\{2\}}) &\vDash  \tp(\sigma(a)/M_{\{0\}}M_{\{2\}}) \\
    &\vDash  \tp(\sigma(a)/M_{\{0,1\}}M_{\{1,2\}}) \\
    &\vDash  \tp(\sigma(a)/ab_0'M_{\{1,2\}}), \end{align*}
    and so $\tp(a,\sigma(a)/M_{\{1,2\}}b_0')$ is isolated, as required.
\end{proof}


\subsection{Classification}

\begin{lemma}[Constructible Models] \label{lem:constructible}
    Let $M \vDash  T$, let $M_0 \prec  M$ be a copy of the prime model,
    and let $B$ be as in \ssecref{ssec:classificationOutline}.

    Let $\widetilde{M}_0 \vDash  \widehat{T}$ with $\rho(\widetilde{M}_0) = M_0$.

    Then there exists $\widetilde{M} \succ ^* \widetilde{M}_0$ constructible over $B\widetilde{M}_0$,
    with $\rho(\widetilde{M}) = M$.
\end{lemma}
\begin{proof}
    Let $I := \P^{\fin}(B)$.

    Let $(M_s)_{s \in I}$ be a constructible independent $I$-system as given
    by \lemref{lem:baseSys}.

    Let (by \lemref{lem:lPrim}(a)(i))
    $\widetilde{M}$ be an l-constructible model over $M\widetilde{M}_0$, and let $\widetilde{M}_s=\rho^{-1}(M_s) \subseteq  \widetilde{M}$.
    By \lemref{lem:lPrim}(b), $\widetilde{M}_{\emptyset } = \widetilde{M}_0$ and $\rho(\widetilde{M})=M$,
    and by \lemref{lem:inverseStrong}, $(\widetilde{M}_s)_s$ is an $I$-$\sim$-system.

    By \propref{prop:indieAtomic}, $(\widetilde{M}_s)$ is a constructible independent system.
    By \lemref{lem:wstab}, each $\widetilde{M}_{\{p\}}$ for $p \in B$ is constructible
    over $\widetilde{M}_{\emptyset }p$, and so by \lemref{lem:sysPrimOverBasis}, $\widetilde{M}=\widetilde{M}_I$ is
    constructible over $\widetilde{M}_{\emptyset }B = \widetilde{M}_0B$.
\end{proof}

\begin{theorem}[Classification] \label{thm:classification}
    A model $\widetilde{M} \vDash  \widehat{T}$ is determined up to isomorphism among models of $\widehat{T}$ by
    \begin{enumerate}[(i)]\item the isomorphism type of the lift $\widetilde{M}_0 = \rho^{-1}(M_0)$ of a copy $M_0 \prec 
      M$ of the prime model, and
    \item the isomorphism type of $M$ over $M_0$. 
    \end{enumerate}
    
    More explicitly: if $\widetilde{M}^1,\widetilde{M}^2 \vDash  \widehat{T}$, if $\widetilde{M}^1_0 \cong  \widetilde{M}^2_0$ where $\widetilde{M}^i_0$ is the
    lift $\rho^{-1}(M^i_0)$ of a copy $M^i_0 \prec  M^i$ of the prime model, and if
    the induced isomorphism $M^1_0 \cong  M^2_0$ extends to an isomorphism $M^1 \cong  M^2$,
    then $\widetilde{M}^1 \cong  \widetilde{M}^2$, in fact by an isomorphism extending the isomorphism
    $\widetilde{M}^1_0 \cong  \widetilde{M}^2_0$ (but not necessarily agreeing with the isomorphism $M^1 \cong  M^2$).

    \[ \xymatrix{
       \widetilde{M_0} \ar@{^(->}[r] \ar@{->>}[d] & \widetilde{M} \ar@{->>}[d] \\\
       M_0 \ar@{^(->}[r]                          & M \\\
    } \]
\end{theorem}
\begin{proof}
    Given $\widetilde{M} \vDash  \widehat{T}$ and $M_0 \prec  M := \rho(\widetilde{M})$,
    let $B$ be as in \ssecref{ssec:classificationOutline}.

    Then $\widetilde{M}$ is constructible and minimal over $B\widetilde{M}_0$, by
    \lemref{lem:constructible} and the minimality of $M$ over $BM_0$.

    So let $M^i$, $\widetilde{M}^1_0 \cong  \widetilde{M}^2_0$, and $M^1 \cong  M^2$ be as in the statement.
    Let $B^1$ be as in \ssecref{ssec:classificationOutline}, and let $B^2$ be
    the image in $M^2$. Then by the quantifier elimination,
    $B^1\widetilde{M}^1_0 \equiv  B^2\widetilde{M}^2_0$, and by constructibility of $\widetilde{M}^1$ over $B^1\widetilde{M}^1_0$, this
    extends to an elementary embedding $\widetilde{M}^1 \prec  \widetilde{M}^2$; but then by minimality of
    $\widetilde{M}^2$ over $B^2\widetilde{M}^2_0$, the embedding is an isomorphism.
\end{proof}

\begin{remark} \label{rem:homog}
    We can also conclude that if $M$ is strongly $\aleph_1$-homogeneous
    (e.g.\ if we take $M$ to be saturated and uncountable), then $\widetilde{M}$ is
    strongly $\aleph_0$-homogeneous over $\widetilde{M}_0$. 
    Indeed, if $\widetilde{a} \equiv _{\widetilde{M}_0} \widetilde{a}'$, then by homogeneity we have $B'$ such that
    $B \widehat{a} \equiv _{M_0} B' \widehat{a}'$, so $B\widetilde{M}_0\widetilde{a} \equiv  B'\widetilde{M}_0\widetilde{a}'$; but $\widetilde{M}$ is also
    constructible and minimal over $B\widetilde{M}_0\widetilde{a}$ and over $B'\widetilde{M}_0\widetilde{a}'$, so this
    extends to an automorphism.

    Similarly, we obtain strong $\aleph_0$-homogeneity over an arbitrary
    countable *-submodel $\widetilde{M}_1 \prec ^* \widetilde{M}$, replacing $B$ with $\acl$-bases over
    $M_1$.

    Moreover, by \propref{prop:indieAtomic}, we similarly obtain strong
    $\aleph_0$-homogeneity over $\widetilde{M}_{<|n|}$ for a $\P(|n|)$-$\sim$-system in $\widetilde{M}$.
    Note that in the context of \cite{BHHKK}, and even in the specific example
    of pseudo-exponentiation, the corresponding results require a saturation
    hypothesis on $\widetilde{M}_s$.
\end{remark}

\section{Exponential maps of semiabelian varieties}
\label{sec:abelian}
In this section, we apply our classification result
\thmref{thm:classification}, along with some arithmetic Kummer theory, to
prove \thmref{thm:catAbelian} and draw some related conclusions.

We actually work in slightly greater generality than \thmref{thm:catAbelian},
by allowing split semiabelian varieties.
So throughout this section, we will suppose that $\G(\C)$ is
the product $A\times \G_m^n$ of a (possibly trivial) complex abelian variety
and a (possibly trivial) algebraic torus.

Let $\er := \End(\G)$ be the ring of algebraic endomorphisms of $\G$. Suppose
$\G$ and its endomorphisms are defined over $k_0 \leq  \C$.

We first explain how we attach to the algebraic group $\G$ a theory $T$
satisfying the assumptions of the previous sections.

$\G$ can be viewed as a definable group in $\operatorname{ACF}_0$, and
as such inherits the structure of a finite Morley rank group. 
Explicitly, we consider $\G(K)$, for $K$ an algebraically closed extension of
$k_0$, as a structure in the language $L$ consisting of a predicate for each
$k_0$-Zariski-closed subset of each Cartesian power $\G^n(K)$. This structure
is bi-interpretable with the field $(K;+,\cdot,(c)_{c\in k_0})$ with
parameters for $k_0$, and is a finite Morley rank group of rank $\dim(\G)$.
We let $T$ be the theory of $\G(\C)$ in the language consisting of a predicate
for each $k_0$-Zariski-closed subset of $\G^n(\C)$. This is a commutative
divisible group of finite Morley rank, admits quantifier elimination, and, by
\lemref{lem:subgroupsEndomorphisms} below, every connected definable subgroup
of $\G^n$ is over $k_0$, so is defined over $\emptyset $ in $T$.

\subsection{$\er$-module homomorphisms as models of $\widehat{T}$}
By \propref{prop:analMod}, the Lie exponential map $\exp : L\G \twoheadrightarrow  \G(\C)$
has the structure of a model of $\widehat{T}$, which we denote by $L\G$. As a step towards
proving \thmref{thm:catAbelian}, we prove in this subsection an abstract
algebraic version of this.

Let $\er := \End(\G)$ be the ring of algebraic (equivalently, definable)
endomorphisms. The derivative at the identity $L\eta$ of $\eta\in\er$ is a
$\C$-linear endomorphism of $L\G$, and we consider $L\G$ as an $\er$ module
with this action.

\begin{lemma} \label{lem:subgroupsEndomorphisms}
  \begin{enumerate}[(i)]\item Any connected algebraic subgroup $H \leq  \G^n$ is the connected component of
      the kernel of an endomorphism $\eta\in \End(\G^n) \isom \operatorname{Mat}_{n,n}(\er)$, and
  \item $LH \leq  L\G^n$ is then the kernel of $L\eta \in \End_\C(L\G^n)$.
  \end{enumerate}
\end{lemma}
\begin{proof}
  \begin{enumerate}[(i)]\item By Poincar\'e's complete reducibility theorem, there exists an algebraic
      subgroup $H'$ such that the summation map $\Sigma:H\times H' \maps \G^n$ is
      an isogeny. So say $\theta : \G^n \maps H\times H'$ is an isogeny such that
      $\theta\Sigma=[m]$, and let $\pi_2 : H\times H' \maps H'$ be the
      projection. Then $\pi_2\theta\Sigma(h,h')=mh'$, so
      $\ker(\pi_2\theta)^o=\Sigma(H\times H'[m])^o=(H+H'[m])^o=H$.
  \item $L\eta$ takes values in the discrete group $\ker(\exp)^n$ on $LH$, so
      by connectedness and continuity $L\eta$ is zero on $LH$. Conversely,
      $\exp(\ker(L\eta))$ is a divisible subgroup of $\ker(\eta)$, and hence is
      contained in $\ker(\eta)^o = H$. So $\ker(L\eta)$ is a subgroup of
      $\exp^{-1}(H)$ containing $LH$; but $\ker(L\eta)$ is a $\C$-subspace so is
      connected, so $\ker(L\eta)=LH$.
  \end{enumerate}
\end{proof}
\begin{remark}
    \lemref{lem:subgroupsEndomorphisms} (i) can fail for $\G$ a semiabelian variety.
\end{remark}

\begin{proposition} \label{prop:OModMods}
    If $K$ is an algebraically closed field extension of $k_0$, any
    surjective $\er$-module homomorphism $\rho : V \twoheadrightarrow  \G(K)$ from a
    divisible torsion-free $\er$-module $V$ with finitely generated kernel is
    a model of $\widehat{T}$, where $\widehat{H}$ is interpreted as the kernel of the action of
    $\eta$ on $V^n$ if $H$ is the connected component of the kernel of
    $\eta\in\End(\G^n) \cong  \operatorname{Mat}_{n,n}(\er)$, and $\rho_n(x) := \rho(x/n)$.
\end{proposition}
\begin{proof}
    We appeal to \lemref{lem:homMod}.
    We will see in the course of the proof that $\widehat{H}$ is indeed the divisible
    part of $\rho^{-1}(H)$, as assumed in that lemma, hence in particular that
    $\widehat{H}$ is well-defined.
    
    We use the following elementary principle, which we will call (*):
    if $A,B,F$ are subgroups of a torsion-free abelian group, $A$ and $B$ are
    divisible, and $F$ is finitely generated, and if $A \leq  B + F$, then $A \leq 
    B$.

    Suppose $H = \ker(\eta)^o \leq  \G^n$, and $\widehat{H} = \ker(\eta)$.
    We show that $\rho_k(\widehat{H})=H$ for all $k$.
    By working in $\G^n$, we may assume $n=1$.
    Let $\Lambda := \ker\rho \leq  V$,
    and let $\Lambda_0 \leq  V$ be the divisible hull of $\Lambda$.
    \begin{claim}
        $\eta(\Lambda_0) = \im\eta\cap\Lambda_0$.
    \end{claim}
    \begin{proof}
        First, note that $\eta(\Tor(\G)) = \Tor(\im\eta)$.
        Indeed, if $n\eta(g) = 0$, then $ng \in \ker\eta$, so $mng \in
        (\ker\eta)^o$ for some $m$; then by divisibility of $(\ker\eta)^o$,
        say $h \in (\ker\eta)^o$ with $mnh=mng$. Then $\eta(g-h) = \eta(g)$ and
        $g-h \in \Tor(\G)$.

        Hence $\im\eta\cap\Lambda_0 \leq  \eta(\Lambda_0) + \Lambda$,
        so by (*) already $\im\eta\cap\Lambda_0 \leq  \eta(\Lambda_0)$. The
        converse is immediate.
    \end{proof}
    Now since $\Lambda$ is finitely generated, $\eta(\Lambda)$ is a finite
    index subgroup of $\im\eta \cap \Lambda$. By the snake lemma (see
    diagram), it follows that $\rho(\widehat{H})$ is of finite index in $\ker(\eta)$.
    So by divisibility of $\widehat{H}$, we have $\rho(\widehat{H})=\ker(\eta)^o=H$, and
    then $\rho_k(\widehat{H})=\rho(\widehat{H})=H$ for all $k$. So \axref{ax:epic} holds. 

    \[ \xymatrix{
                  &                       & \Lambda \ar[r]\ar[d] & \Lambda\cap\im\eta \ar[r]\ar[d] & \cdots \\
                  & \widehat{H}       \ar[r]\ar[d] & V \ar[r]\ar[d]_\rho  & \im\eta \ar[r]\ar[d]            & 0\\
    0 \ar[r]      & \ker\eta \ar[r]\ar[d] & \G \ar[r]\ar[d]      & \im\eta \\
    \cdots \ar[r] & \operatorname{Finite} \ar[r]       & 0
    } \]

    Hence $\rho^{-1}(H) = \widehat{H} + \Lambda$,
    so by (*), $\widehat{H}$ is the divisible part of $\rho^{-1}(H)$.

    Finally, we verify \axref{ax:proj}. Let $\pr : G \twoheadrightarrow  H$ be as in that
    axiom.
    By \axref{ax:epic}, 
        \[ \rho(\pr(\widehat{G})) = \pr(\rho(\widehat{G})) = \pr(G) = H = \rho(\widehat{H}) ,\]
    so $\pr(\widehat{G}) + \Lambda = \widehat{H} + \Lambda$,
    so by (*), $\pr(\widehat{G}) = \widehat{H}$.
\end{proof}


\subsection{Kummer theory}
Suppose now that $k_0$ is a number field.

Using this assumption, we may appeal to Kummer theory to reduce consideration
of the prime model to consideration of the kernel. This is essentially the
same argument as in \cite[Lemma 4]{MishaTrans}.

Recall $T = \Th(\G(\C))$ in the language with a predicate for each subvariety
defined over $k_0$ of a cartesian power of $\G$.

\begin{lemma} \label{lem:kummer}
    Suppose $\widetilde{M}_0 \vDash  \widehat{T}$ with $M_0 = \rho(\widetilde{M}_0) = \G(\Qbar)$, the prime model
    of $T$. Then $\widetilde{M}_0$ is constructible over $\ker(\widetilde{M}_0)$.
\end{lemma}
\begin{proof}
    Write $\ker$ for $\ker(\widetilde{M}_0)$.

    We use notation and results from \secref{sec:kummer}.

    By \lemref{lem:atomCons}, it suffices to show atomicity. Let $\widetilde{c} \in \widetilde{M}_0$.

    Let $H+\zeta$ be the minimal torsion coset (see \secref{sec:openness})
    containing $c$. Then $\widetilde{c} \in \widehat{H}+\widetilde{\zeta}$ for some $\widetilde{\zeta} \in \Q\ker$.
    By translating, we may assume $\widetilde{\zeta}=0$, so then $\widetilde{c} \in \widehat{H}$ and $H$ is
    the minimal torsion coset containing $c$.


    By \propref{prop:kummer}, the image of the Kummer pairing is open,
        \[ Z_\infty := \pairing{\Gal(k_0(c,\G[\infty]))}{c} \leq _{\operatorname{op}} T_\infty^{H} ,\]
    so $nT_\infty^{H} \leq  Z_\infty$ for some $n$,
    so 
    \[ \tp^{T}(c_n/\G[\infty]) \cup \Cup_i \{ c_i \in H \} \cup
    \Cup_{k,m} \{ [m] c_{km} = c_k \} \vDash  \tp^{T}(\widehat{c}/\G[\infty]) .\]

    So since $\widehat{\ker} = G[\infty]$, it follows by \lemref{lem:typesKerPres} that
        \[ \tp^{T}(c_n/\G[\infty]) \cup \{\widetilde{c} \in \grploc(\widetilde{c}/\ker) \} \vDash  \tp(\widetilde{c}/\ker) .\]
    But $\tp^{T}(c_n/\G[\infty])$ is isolated since $c_n \in \G(\Qbar)$, so
    $\tp(\widetilde{c}/\ker)$ is isolated as required.
\end{proof}


\subsection{Categoricity and characterisation}
We continue to assume that $k_0$ is a number field.

Combining \lemref{lem:kummer} with \thmref{thm:classification}, and using
uncountable categoricity of $T$ to simplify the latter, we conclude:
\begin{theorem} \label{thm:classAbelian}
    A model $\widetilde{M}$ of $\widehat{T}$ is determined up to isomorphism over $\ker(\widetilde{M})$ by
    \begin{enumerate}[(i)]\item the isomorphism type of $\ker(\widetilde{M})$, equipped with all structure induced from $\widehat{T}$
    \item the transcendence degree of $K_M$, where $M \cong  \G(K_M)$.
    \end{enumerate}
\end{theorem}
\begin{proof}
    Suppose $\widetilde{M}^1,\widetilde{M}^2 \vDash  \widehat{T}$,
    and $\ker(\widetilde{M}^1) \cong  \ker(\widetilde{M}^2)$ and $\trd(K_{M^1}) = \trd(K_{M^2})$.
    Let $\widetilde{M}^i_0$ be the inverse image of $M^i_0 := \G(\Qbar) \prec  M^i$.
    Then by \lemref{lem:kummer} and the minimality of $\G(\Qbar)$ over $\emptyset $,
    the isomorphism $\ker(\widetilde{M}^1) \cong  \ker(\widetilde{M}^2)$ extends to an
    isomorphism $\widetilde{M}^1_0 \cong  \widetilde{M}^2_0$. The induced isomorphism $M^1_0 \cong  M^2_0$
    extends to an isomorphism $M^1 \cong  M^2$; indeed, it induces a field
    automorphism of $\Qbar$ over $k_0$, which by the equality of transcendence
    degrees extends to an isomorphism $K_{M^1} \cong  K_{M^2}$, inducing an
    isomorphism $M^1 \cong  M^2$.

    We conclude by \thmref{thm:classification}.
\end{proof}

\begin{corollary} \label{cor:categoricity}
  The model $L\G \vDash  \widehat{T}$ is the
  unique $\widehat{L}$-structure $\widetilde{M}$ satisfying:
    \begin{enumerate}[(I)]\item $\widehat{T}$
    \item $|\widetilde{M}| = 2^{\aleph_0}$
    \item $\ker^{\widetilde{M}} \isom \ker^{L\G}$, a partial $\widehat{L}$-isomorphism.
    \end{enumerate}

    Moreover, for any such $\widetilde{M}$, the isomorphism of (III) extends to an
    isomorphism of $\widetilde{M}$ with $L\G$.
\end{corollary}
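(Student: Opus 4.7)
The plan is to invoke Conclusion~\ref{conc:cat}, which says that a model of $\widehat{T}$ is determined up to isomorphism by the isomorphism types of $M := \rho(\widetilde{M}) \models T$ and of the kernel $\ker^{\widetilde{M}}$ equipped with its induced $\widehat{T}$-structure. Hypothesis (III) supplies the kernel condition, so what remains is to verify $M \isom \G(\C)$. Since $\ker^{U_\G}$ is the period lattice $\Lambda$, which is finitely generated as an abelian group and hence countable, (III) forces $|\ker^{\widetilde{M}}| \leq \aleph_0$, so by (II) we have $|M| = 2^{\aleph_0}$. The theory $T$, being bi-interpretable with $\operatorname{ACF}_0$ over $k_0$ via its predicates for $k_0$-Zariski-closed subsets of each $\G^n$, is categorical in all uncountable cardinalities, so $M \isom \G(\C)$. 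Then Conclusion~\ref{conc:cat} gives $\widetilde{M} \isom U_\G$.

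For the moreover clause, I would track the given kernel isomorphism through the primary-model constructions underlying Conclusion~\ref{conc:cat} and \thmref{thm:classification}. Write $\widetilde{M}_0$ for $\rho^{-1}(\G(\Qbar))$ inside $\widetilde{M}$, and $V_0$ for $\rho^{-1}(\G(\Qbar))$ inside $U_\G$. By \lemref{lem:kummer}, both $\widetilde{M}_0$ and $V_0$ are primary over their kernels, so by uniqueness of primary extensions the isomorphism (III) extends to an isomorphism $\sigma_0 : \widetilde{M}_0 \isom V_0$. The induced map on prime models $\rho(\widetilde{M}_0) \isom \G(\Qbar)$ extends, by uncountable categoricity of $T$ and a transcendence-basis back-and-forth, to an isomorphism $\tau : M \isom \G(\C)$. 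Finally, picking an $\operatorname{acl}$-basis $B \subset M$ over the prime model for the strongly minimal sets $D_i$ as in \thmref{thm:classification}, the image $\tau(B)$ is a corresponding basis for $\G(\C)$; since \thmref{thm:classification} shows that $\widetilde{M}$ is primary over $\widetilde{M}_0 \cup B$ and $U_\G$ is primary over $V_0 \cup \tau(B)$, the combined map $\sigma_0$ together with $B \mapsto \tau(B)$ extends to the sought isomorphism $\widetilde{M} \isom U_\G$ restricting to (III) on the kernel.

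The main subtlety is the routine bookkeeping at each extension step: one must check that the relevant type over the base agrees after transport by the partial isomorphism already constructed. For the first extension this is immediate from (III); for the second it uses the invariance of the strongly minimal sets $D_i$ and their dimensions under any isomorphism of prime models; for the third, it is guaranteed by the primariness provided by \thmref{thm:classification}.
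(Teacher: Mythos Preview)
Your proof is correct and follows essentially the same route as the paper's: invoke Conclusion~\ref{conc:cat}, reduce to categoricity of $T$ via bi-interpretability with $\operatorname{ACF}_0$, and for the moreover clause extend the kernel isomorphism first to $\widetilde{M}_0$ via \lemref{lem:kummer}, then across a transcendence basis, then to all of $\widetilde{M}$ via the primariness in \thmref{thm:classification}. You are more explicit than the paper about the cardinality bookkeeping (countability of the lattice forcing $|M|=2^{\aleph_0}$) and about transporting the basis via $\tau$, but the argument is the same.
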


\begin{theorem}[\thmref{thm:catAbelian}]
  Suppose $\rho,\rho' : L\G \twoheadrightarrow  \G(\C)$ are surjective $\er$-module homomorphisms,
  $\ker\rho'=\ker\rho$, and
  $\rho'\restricted_{\spanofover{\ker\rho'}{\Q}} =
  \rho\restricted_{\spanofover{\ker\rho}{\Q}}$.

  Then there exists an 
  $\er$-module automorphism $\sigma\in\Aut_\er(L\G/\ker\rho)$ and a field
  automorphism $\tau\in\Aut(\C/k_0)$ of $\C$ fixing $k_0$ such that
  $\tau\rho'=\rho\sigma$.
\end{theorem}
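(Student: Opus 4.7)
The plan is to deduce this from the ``moreover'' clause of \corref{cor:categoricity} applied to $\widetilde{M} := \rho'$. First I will verify hypothesis (I): by \propref{prop:analyticModels}, $\rho'$ is a model of $\widehat{T}'$, since $\ker\rho' = \ker\rho$ is a lattice in $T_0(\G(\C))$ and hence finitely generated as an abelian group; for $\rho$ itself this is \corref{cor:analyticModels}. Hypothesis (II) is immediate since $|T_0(\G(\C))| = 2^{\aleph_0}$.

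For hypothesis (III), I will observe that the kernel substructures of $\rho'$ and $U_\G$ are in fact literally equal as two-sorted substructures of the ambient models, so the identity can serve as the required isomorphism. As subsets of $T_0(\G(\C))$ they coincide; both structures interpret each predicate $\widehat{H}$ as the annihilator in $V^n$ of the corresponding $\er$-endomorphism, which depends only on the $\er$-module structure on $T_0(\G(\C))$ and so is the same subset in both interpretations; finally, the functions $\rho_n$ restricted to the kernel take values in $\Tor(\G) = \rho(\kerQ)$, and $\rho|_{\kerQ} = \rho'|_{\kerQ}$ by hypothesis, so $\rho_n|_{\ker} = \rho'_n|_{\ker}$ for every $n$. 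Hence the identity on $\ker$ is a substructure isomorphism in the language of $\widehat{T}$.

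Invoking the moreover clause of \corref{cor:categoricity} now yields an isomorphism $\Phi = (\sigma_0, \bar\tau) : \rho' \xrightarrow{\isom} U_\G$ of two-sorted structures extending the identity on $\ker$. Preservation of the predicate $\widehat{H}$ for $H$ the graph of an endomorphism $\eta \in \er$ forces $\sigma_0$ to commute with $\eta$, making $\sigma_0$ an $\er$-module automorphism of $T_0(\G(\C))$ that fixes $\ker\rho$ pointwise; by the bi-interpretability of $T$ with $\operatorname{ACF}_0$ over $k_0$ used in the proof of \corref{cor:categoricity}, the base-sort bijection $\bar\tau$ is induced by a field automorphism $\tau \in \Aut(\C/k_0)$; and preservation of the structure map $\rho_1$ gives the compatibility $\rho \circ \sigma_0 = \tau \circ \rho'$. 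Setting $\sigma := \sigma_0^{-1}$ (again an $\er$-automorphism fixing $\ker\rho$ pointwise) produces the desired relation $\tau\rho'\sigma = \rho$. The heavy lifting is all in \corref{cor:categoricity}; there is no substantial obstacle, only the routine task of translating the analytic hypotheses into the two-sorted language of $\widehat{T}$ and verifying that the kernel substructures coincide.
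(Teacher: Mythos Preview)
Your proposal is correct and follows essentially the same route as the paper's own proof: verify that $\rho'$ gives a model of $\widehat{T}$ via \propref{prop:analyticModels}, check that the kernel substructures agree so that condition (III) of \corref{cor:categoricity} is satisfied, invoke the ``moreover'' clause to obtain an isomorphism extending the identity on $\ker$, and then read off $\sigma$ and $\tau$ from the two sorts. Your write-up is in fact more careful than the paper's in spelling out why the $\widehat{H}$-predicates and the maps $\rho_n$ agree on $\ker$, and in making explicit the inversion $\sigma := \sigma_0^{-1}$ needed to obtain the stated equation $\tau\rho'\sigma = \rho$.
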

\begin{proof}
  Let $\widetilde{M}$ and $\widetilde{M}'$ be the corresponding $\widehat{L}$ structures.
  By \propref{prop:OModMods}, they are models of $\widehat{T}$.
  By the QE and the assumption on the kernels, the structure induced on
  $\ker\rho$ by the two structures is the same, and the transcendence degrees
  are both $2^{\aleph_0}$. So by
  \thmref{thm:classAbelian}, $\widetilde{M}' \cong  \widetilde{M}$ as $\widehat{L}$-structures,
  by an isomorphism fixing $\ker\rho$.
  Since the graphs of addition and of the action of each $\eta\in\er$ are
  interpretations of appropriate $\widehat{H}$, this isomorphism induces an
  $\er$-module automorphism $\sigma$ of $L\G$, and by the choice of language
  for $T$ it induces a field automorphism $\tau$ over $k_0$.
\end{proof}

Understanding the structure of $\ker$ involves an understanding of the action
of Galois on the torsion, which in general is known to be a hard problem.
But let us highlight a strengthening of \thmref{thm:classAbelian} in the case
of the characteristic 0 multiplicative group:
\begin{theorem}
  Let $\G=\G_m(\C)$. Then a model $\widetilde{M}$ of $\widehat{T}$ is determined up to isomorphism
  by the transcendence degree of the algebraically closed field $K$ such that
  $M \cong  \G_m(K)$, and the isomorphism type of $\ker\rho$ as an abstract
  group.
\end{theorem}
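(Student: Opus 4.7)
The plan is to deduce this strengthening from Conclusion~\ref{conc:cat} specialized to $\G=\G_m$. By that conclusion, a model $\widetilde{M}\models\widehat{T}$ is determined up to isomorphism by the pair consisting of $M=\rho(\widetilde{M})\models T$ together with the $\widehat{T}$-substructure induced on $\ker$. Since $T=\Th(\G_m(\C))$ is bi-interpretable with $\operatorname{ACF}_0$ over $\Q$, the isomorphism type of $M\isom\G_m(K)$ is determined by $\op{trdeg}(K/\Q)$, so the task reduces to showing that for $\G=\G_m$ the induced $\widehat{T}$-substructure on $\ker$ is determined, up to isomorphism, by its underlying abstract group.

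For $\G=\G_m$, the $\emptyset$-definable connected subgroups of $\G_m^n$ are subtori, cut out by $\Z$-linear equations, so each $\widehat{H}$-predicate restricts on $\ker^n$ to a $\Z$-linear condition which any abelian group homomorphism automatically preserves. The substantive remainder of the induced structure consists of the maps $\rho_n|_\ker:\ker\to\mu_n(K)$, which assemble into an embedding $\iota:\ker\hookrightarrow T(\G_m)=\liminv\mu_n\isom\widehat{\Z}$ (after fixing a compatible system of primitive roots of unity). By \lemref{lem:hatHomAlg} applied to $0\to\mu_n\to\G_m\xrightarrow{[n]}\G_m\to 0$, the map $\rho_n|_\ker$ is surjective onto $\mu_n$, so $\iota(\ker)$ is dense in $\widehat{\Z}$.

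Given two models $\widetilde{M}_1,\widetilde{M}_2\models\widehat{T}$ over a common $M$ with $\ker_1\isom\ker_2$ abstractly, the strategy is to produce an abstract group isomorphism $\alpha:\ker_1\to\ker_2$ together with an element $u\in\widehat{\Z}^*$ such that $\iota_2\alpha=u\cdot\iota_1$ as maps $\ker_1\to\widehat{\Z}$. By surjectivity of the cyclotomic character $\Gal(\Qbar/\Q)\mapsonto\widehat{\Z}^*$ (Kronecker--Weber), there is $\sigma\in\Gal(\Qbar/\Q)$ acting on $\mu_\infty$ by $u^{-1}$; extending $\sigma$ to $\tau\in\Aut(K/\Q)$ via the uncountable categoricity of $\operatorname{ACF}_0$ produces a $T$-automorphism of $M$ under which $\alpha$ becomes an isomorphism of kernel substructures. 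Conclusion~\ref{conc:cat} then lifts this to a $\widehat{T}$-isomorphism $\widetilde{M}_1\isom\widetilde{M}_2$.

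The main obstacle is the construction of $\alpha$ and $u$: one must show that any two dense, abstractly isomorphic subgroups of $\widehat{\Z}$ lie in the same $\widehat{\Z}^*$-orbit, and choose $\alpha$ to realize this conjugation. For the analytic case $\ker\isom\Z$ this is immediate, since a dense copy of $\Z$ in $\widehat{\Z}$ has the form $\Z\cdot v$ with $v\in\widehat{\Z}^*$, and any two such differ by multiplication by a unit. In general one proceeds prime by prime, using $\widehat{\Z}=\prod_p\Z_p$ and the fact that dense subgroups of $\Z_p$ are sufficiently rigid --- as $\Z_p$-submodules of their quotients by $p^k$ --- that their abstract group type determines them up to $\Z_p^*$-multiplication; assembling the resulting primewise units yields the required $u\in\widehat{\Z}^*$.
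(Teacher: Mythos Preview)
Your reduction to Conclusion~\ref{conc:cat} and your analysis of the induced structure on $\ker$ match the paper's approach. The divergence is in the final paragraph, where you treat the construction of $\alpha$ and $u$ as ``the main obstacle'' and propose a prime-by-prime argument. This is both unnecessary and, as written, does not work.

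It is unnecessary because $u$ comes for free from \emph{any} abstract group isomorphism $\alpha:\ker_1\to\ker_2$. You have already observed that $\rho_n|_{\ker}$ surjects onto $\mu_n$; its kernel is $n\ker$ (since $\rho_n(\zeta)=0$ iff the unique $n$th division point $\zeta/n$ lies in $\ker$), so $\rho_n$ induces an isomorphism $\ker/n\ker\cong\mu_n$. Hence $\alpha$ descends to a group automorphism $\bar\alpha_n$ of $\mu_n\cong\Z/n\Z$, necessarily multiplication by some $u_n\in(\Z/n\Z)^*$; these are compatible in $n$, yielding $u\in\widehat{\Z}^*$ with $\iota_2\alpha=u\cdot\iota_1$ automatically. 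This is precisely the paper's argument, phrased there as ``any group automorphism of the roots of unity is a Galois automorphism''. Nothing more is required.

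Your proposed route is problematic because a dense subgroup $A\leq\widehat{\Z}=\prod_p\Z_p$ is in general \emph{not} the product of its projections to the factors $\Z_p$ --- already $\Z\leq\widehat{\Z}$ projects densely into every $\Z_p$ yet is far from $\prod_p\Z_p$. So even if you found units $u_p\in\Z_p^*$ matching the $p$-adic projections, the assembled $u=(u_p)_p$ need not carry $\iota_1(\ker_1)$ onto $\iota_2(\ker_2)$. Your rigidity claim for dense subgroups of $\Z_p$ is also not clearly formulated. The conclusion you want is true, but it follows from the short argument above rather than from a componentwise analysis.
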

\begin{proof}
  This is immediate from \thmref{thm:classAbelian} once we see that the
  isomorphism type of $\ker$ as a $\widehat{L}$-structure is
  determined by its isomorphism type as an abstract group. But this follows
  easily from the quantifier elimination and the fact from cyclotomic theory
  that any group automorphism of the roots of unity is a Galois automorphism.
\end{proof}

\begin{remark}
  In the case of an elliptic curve $\G = E$ there are only finitely many
  isomorphism types for a kernel with underlying group $\seq{\Z^2;+}$
  (\cite{MishaTrans}, \cite[Theorem~4.3.2]{BaysThesis}).

  See also \cite[IV.6.3,IV.7.4]{GavThesis} for some discussion of the higher
  dimensional situation.
\end{remark}

\begin{question}
  The assumption that $k_0$ is a number field was used in \lemref{lem:kummer}.
  It is natural to ask whether this is essential. Does an appropriate
  version of Kummer theory go through for Abelian varieties over function
  fields? We are not aware of this question being fully addressed in the
  literature, but \cite[Theorem~5.4]{BertrandLuminy} goes some way toward it.
\end{question}

\section{Further examples}
\label{sec:examples}
In this section, we make some brief remarks on some other natural examples of
\thmref{thm:classification}.

\subsection{Positive characteristic}
We can not in general expect to improve on \thmref{thm:classification} in
positive characteristic: if $\G$ is the multiplicative group of a
characteristic $p>0$ algebraically closed field, then the prime model is
$G(\GF_p^{\operatorname{alg}})$, which is also the torsion group of $\G$. In this case, we
recover the main theorem, 2.2, of \cite{BZCovers}.

\subsection{Manin kernels}
In the theory $\operatorname{DCF}$ of differentially closed fields of characteristic 0, the
Kolchin closures of the torsion of semiabelian varieties, also known as
{\em Manin kernels}, are commutative divisible groups of finite Morley rank.
A connected definable subgroup of such a Manin kernel is the Manin kernel of
its Zariski closure,
so Manin kernels of semiabelian varieties are rigid.
Our classification theorem therefore applies to this case. By considering a
local analytic trivialisation, a natural analytic model of $\widehat{T}$ for $\G$ a
(non-isoconstant) Manin kernel can be given; this will be addressed in future
work.

\subsection{Meromorphic Groups}
\label{ssec:meroGrp}
Let $\G$ be a connected meromorphic group in the sense of
\cite{PSMeroGrp}, i.e.\ a connected definable group in the structure
$\mathcal{A}$ of compact complex spaces definable over $\emptyset $ (equivalently,
over $\C$). By \cite[Fact~2.10]{PSMeroGrp}, $\G$ can be uniquely
identified with a complex Lie group.

Considering $\G$ with its induced structure, it is a finite Morley rank group.
Suppose $\G$ is commutative and rigid. By the classification in
\cite{PSMeroGrp} and the fact that any commutative complex linear
algebraic group is a product of copies of $\G_m$ and $\G_a$, there is a
definable exact sequence of Lie groups
    \[ 0 \rightarrow  \G_m^n \rightarrow  \G \rightarrow  H \rightarrow  0 \]
where $H$ is a complex torus.  It is also shown in \cite{PSMeroGrp} that
$\G$ is definable in a K\"{a}hler space; the latter may be considered in a
countable language by \cite{MoosaKahler}, so we may consider the language of
$\G$ to be the induced countable language. Let $T=\Th(\G)$. 

In particular, in the case that $\G$ is a complex semiabelian variety, we may
take the language to be that induced from the field, as in
\secref{sec:abelian} above.

Now let $L\G$ be the analytic universal cover of the Lie group $\G$,
considered as an $\widehat{L}$-structure as in \ssecref{ssec:lie}.

By \propref{prop:analMod}, $L\G \vDash  \widehat{T}$.
So by \thmref{thm:classification}, $L\G$ is the unique kernel-preserving
extension of its restriction to the prime model $\G_0$ of $\G$, which is a
countable structure.

\begin{question}
    Could the Kummer theory of Lemma \ref{lem:kummer} apply here?
    Concretely: is $\rho^{-1}(\G_0)$ atomic over $\ker$? 
\end{question}



\appendix
\section{Kummer theory for $A\times\G_m^n$}
\label{sec:kummer}

In this appendix, we show that the results on Kummer theory for abelian
varieties over number fields apply also to semiabelian varieties of the form
$A\times\G_m^n$ for $A$ an abelian variety over a number field.
This should perhaps be considered a known result, but we could find no
complete proof in the literature. 

Our approach owes much to Daniel Bertrand. In the case that $\G=A$, the Kummer
theoretical result we require is precisely \cite[Theorem~5.2]{BertrandLuminy};
the purpose of this appendix is to show that this result holds also for
$A\times\G_m^n$, with a mostly parallel proof. As in that article, the method we
apply is essentially that of Ribet's paper \cite{RibetKummer}.

We should note that for general semiabelian varieties over number fields,
Kummer theory is known to fail due to the existence of {\em deficient points}
- see \cite{JacquinotRibet84}.

\subsection{Finiteness theorems for abelian varieties}
Let $A$ be an abelian variety over a number field $k_0$,
let $T^{A}_l := \invlim_n A[l^n]$ for $l$ prime be the Tate modules, and let
$T^{A}_\infty := \invlim_n A[n] = \Pi_l T^{A}_l$.

The following result on Galois cohomology is a consequence of Serre's uniform
version of Bogomolov's result on homotheties. Here and below, $H^i$ refers to
continuous group cohomology.
\begin{fact} \label{fact:serre}
    $H^1(\Gal(k_0(A[\infty])/k_0),A[n])$ has uniformly bounded finite exponent,
    i.e.\ there exists $c>0$ such that for all $n>0$,
    we have $c\cdot H^1(\Gal(k_0(A[\infty])/k_0),A[n]) = 0$.
\end{fact}
\begin{proof}
    Let $G_\infty := \Gal(k_0(A[\infty])/k_0)$.

    Note that $H^1(G_\infty,A[n])$ admits a prime power decomposition as
    $\prod_i H^1(G_\infty,A[l_i^{k_i}])$ where $n=\prod_i l_i^{k_i}$.

    By
    \cite[Th\'eor\`eme 2', ``R\'esum\'e des cours de 1985-1986'', proved in
    ``Lettre \'a Ken Ribet du 7/3/1986'' in the same volume]{SerreOeuvresIV},
    there exists $M>0$ such that every $M$th power homothety is in the image of
    $G_\infty$, i.e.\ any element of $\Zhat^* = \Pi_l \Z_l^*$ which is an $M$th
    power in that group is the action on $T^{A}_\infty$ of some element of
    $G_\infty$.
    
    In particular, there is $\sigma \in G_\infty$ which acts on $T^{A}_l$ as
    multiplication by $2^{M}$ for $l \neq  2$, and acts on $T^{A}_2$ as the
    identity. Then $\sigma$ is central in $G_\infty$, so by Sah's Lemma,
    $H^1(G_\infty,T^{A}_\infty)$ and each $H^1(G_\infty,A[n])$ are annihilated
    by $\sigma-1$. Then if $l$ is an odd prime which does not divide $2^{M}-1$,
    so $2^{M}-1 \in \Z_l^*$, we have $H^1(G_\infty,A[l^k])=0$ for all $k$.

    Let $2=l_0,l_1,\ldots,l_s$ be the remaining primes, and let $p \notin \{l_0,\ldots,l_s\}$
    be another prime. Then by the same argument, $p^{M}-1$ annihilates
    each $H^1(G_\infty,A[l_i^k])$.

    So $p^{M}-1$ annihilates each $H^1(G_\infty,A[n])$.
\end{proof}

The second ingredient is the following result of Faltings, sometimes referred
to, after Lang, as Finiteness I \cite[IV.2]{LangSurveyDioph}. Here, a
\defnstyle{$k_0$-isogeny} is an isogeny defined over $k_0$; similarly for
\defnstyle{$k_0$-isomorphism}.
\begin{fact}[Faltings] \label{fact:faltings}
    The algebraic groups which are $k_0$-isogenous to
    $A$ fall into finitely many $k_0$-isomorphism classes.
\end{fact}

\subsection{Generalisations to $A \times \G_m^n$}
Let $\G = A \times \G_m^n$ with $A$ an abelian variety over a number field
$k_0$. We check that the results of the previous section imply the
corresponding results for $\G$.

\begin{lemma} \label{lem:serre}
    $H^1(\Gal(k_0(\G[\infty])/k_0),A[n])$ has uniformly bounded finite exponent,
    i.e.\ there exists $c>0$ such that for all $n>0$,
    we have $c\cdot H^1(\Gal(k_0(\G[\infty])/k_0),A[n]) = 0$.
\end{lemma}
\begin{proof}
    By Hilbert 90, $H^1(\Gal(k_0(\G[\infty])/k_0),\mu_m) = 0$.
    Meanwhile, $k_0(\G[\infty]) = k_0(A[\infty])$ since the multiplicative
    roots of unity are rational over $k_0(A[\infty])$, via a Weil pairing.

    So
      \begin{align*} H^1(\Gal(k_0(\G[\infty])/k_0),\G[n])
        &\cong  H^1(\Gal(k_0(\G[\infty])/k_0),A[n]) \\
        &= H^1(\Gal(k_0(A[\infty])/k_0),A[n]) ,
        \end{align*}
    and we conclude by \factref{fact:serre}.
\end{proof}

\begin{lemma} \label{lem:faltings}
    The algebraic groups which are $k_0$-isogenous to
    $\G$ fall into finitely many $k_0$-isomorphism classes.
\end{lemma}
\begin{proof}
    \providecommand{\dual}[1]{{#1}^\vee}
    Let $T := \G_m^n$.

    Recall (see e.g.\ \cite[10]{SerreAbelian}) that a semiabelian variety
    which falls into an exact sequence $0 \rightarrow  T \rightarrow  S \rightarrow  A \rightarrow  0$
    corresponds to a point in the $n$th power of the dual abelian variety of
    $A$,
    \[ \Ext(A,T) \cong  \Ext(A,\G_m)^n \cong  (\dual{A})^n .\]

    Let $\G'$ be $k_0$-isogenous to $\G$, so $\G' \cong  \G/Z$
    for $Z \leq  \G$ a finite subgroup defined over $k_0$. Since $\G / (Z\cap T)$
    is $k_0$-isomorphic to $\G$, we may assume $Z\cap T = 0$.

    Let $\pi_1 : \G \rightarrow  A$ and $\pi_2 : \G \rightarrow  T$ be the projections of the product.
    Let $A' := A / \pi_1(Z)$ be the quotient abelian variety. So $\G'$ is an
    extension of $A'$ by $T$, and so $\G'$ corresponds to an element $[\G']$ of
    $\Ext(A',T) \cong  (\dual{A'})^n$.

    \begin{claim}
        $[\G']$ is a torsion element of $\Ext(A',T)$.
    \end{claim}
    \begin{proof}
      Let $k$ be the exponent of the finite group $\pi_2(Z) \leq  T$.
      Then the $k$-fold Baer sum $[k]\G'$ of $\G'$ in $\Ext(A',T)$ is split.
      Indeed, $[k]\G'$ is the $k$-fold fibre product of $\G'$ over $A'$,
      quotiented by the subgroup
      $\Sigma := \{ \Sigma_i \alpha_i = 0 \;|\; \alpha_i \in T \} \leq  T^k \leq  A'^k$.
      Then the trivialisation $x \mapsto  (x,0)$ of $\G = A \times T$ induces a
      trivialisation of $[k]\G'$,
      $x + \pi_1(Z) \mapsto  ((x,0)+Z, \ldots, (x,0)+Z) + \Sigma$;
      this is well-defined as $((x,0)+Z) - ((x+\pi_1\zeta,0)+Z) =
      (0,\pi_2\zeta)+Z$,
      and $(\pi_2\zeta, \ldots, \pi_2\zeta) \in \Sigma$ since $k\pi_2\zeta=0$.
    \end{proof}

    Now since $\G'$ is defined over $k_0$, so is $A'$ and so is the torsion
    point $[\G']$ of $(\dual{A'})^n$. By \factref{fact:faltings},
    there are only finitely many such $A'$ up to $k_0$-isomorphism, and by
    Mordell-Weil each has only finitely many $k_0$-rational torsion points.
    Hence, there are only finitely many possibilities for $\G'$ up to
    $k_0$-isomorphism.
\end{proof}

\subsection{Group structure of $\G(k_0(\G[\infty]))$}
\begin{definition}
    If $\Gamma'$ is a subgroup of an abelian group $\Gamma$, let
    $\pureHull_{\Gamma}(\Gamma') := \{\gamma \in \Gamma \;|\; \exists
    n>0 \qsep n\gamma\in\Gamma'\} \leq  \Gamma$.

    An abelian group $\Gamma$ is {\em locally free modulo torsion} if for any
    finitely generated subgroup $\Gamma' \leq  \Gamma$, there exists $m$ such
    that $m\cdot\pureHull_{\Gamma}(\Gamma') \leq  \Gamma' + \Tor(\Gamma)$.
\end{definition}

Now let $k_0$ be a number field,
let $A$ be an abelian variety over $k_0$,
and let $\G = A \times \G_m^n$ be the product with an algebraic torus.
Let $k_\infty := k_0(\G[\infty])$.

\begin{lemma} \label{lem:boundedDiv}
    $\G(k_\infty)$ is locally free modulo torsion.
\end{lemma}
\begin{remark}
    By countability of $\G(k_\infty)$ and a theorem of Pontryagin
    \cite[19.1]{FuchsInfAb}, an equivalent statement is that the quotient group
    $\G(k_\infty) / \G[\infty]$ is free abelian. For $\G$ an abelian
    variety over a number field, this is proven by Larsen in \cite{LarsenMWTor}.
    This lemma generalises that result, using similar techniques.
\end{remark}
\begin{proof}
    Let $\Gamma \leq  \G(k_\infty)$ be a finitely generated subgroup.
    Replacing $k_0$ by the number field $k_0(\Gamma)$ if necessary, we
    assume $\Gamma \leq  \G(k_0)$. 

    First, we see that $\G(k_0) = A(k_0) \times \G_m^n(k_0)$ is free modulo
    torsion.
    We use Dirichlet's Unit theorem to examine the group structure of
    $\G_m(k_0) = k_0^*$. Here, we are following \cite[Lemma 2.1]{ZCovers}.

    \providecommand{\valring}{\mathcal{O}}
    Let $\valring_{k_0}$ be the ring of integers of $k_0$. By
    Dirichlet's Unit theorem, $\valring_{k_0}^*$ is finitely generated.
    Recall that $\valring_{k_0}$ is a Dedekind domain and the fractional ideals,
    $\operatorname{Id}(\valring_{k_0})$, form a free abelian group with generators the prime
    ideals. We have an exact sequence
    \[ \xymatrix{ 1 \ar[r] & \valring_{k_0}^* \ar[r] & k_0^* \ar[r]^-\theta &
    \operatorname{Id}(\valring_{k_0}) } ,\]
    where $\theta(x) := x\valring_{k_0}$. The image of $\theta$ is a subgroup
    of a free abelian group, so is free abelian.

    Meanwhile, $A(k_0)$ is finitely generated by the Mordell-Weil theorem.
    So $\G(k_0)$ is an extension of a free abelian group by a finitely
    generated group, so the quotient by the torsion is an extension of free
    abelian by free abelian, so is free abelian. Hence $\G(k_0)$ is locally
    free modulo torsion.

    So say $m$ is such that
    $m\cdot\pureHull_{\G(k_0)}(\Gamma) \leq  \Gamma + \G[\infty]$.
 
    Meanwhile, by \lemref{lem:serre},
    say $c \cdot H^1(\Gal(k_\infty/k_0),\G[n]) = 0$ for all $n$.

    We conclude by showing
    $mc\cdot\pureHull_{\G(k_\infty)}(\Gamma) \leq  \Gamma + \G[\infty]$.

    Indeed, suppose $\gamma \in \pureHull_{\G(k_\infty)}(\Gamma)$,
    say
    $\gamma \in \G(k_\infty)$ and $n\gamma \in \Gamma \leq  \G(k_0)$.
    Then $\theta(\sigma) := \sigma\gamma - \gamma$ yields an element of
    $H^1(\Gal(k_\infty/k_0),\G[n])$.
    So $c\theta$ is a coboundary,
    so there is $\zeta \in \G[n]$ such that
    $c(\sigma\gamma - \gamma) = \sigma\zeta-\zeta$ for all
    $\sigma \in \Gal(k_\infty/k_0)$,
    so $c\gamma - \zeta \in \G(k_0)$.

    So $c\gamma - \zeta \in \pureHull_{\G(k_0)}(\Gamma)$,
    so $mc\gamma \in \Gamma + \G[\infty]$.
\end{proof}

\subsection{Openness}
\label{sec:openness}
Let $\G = A \times \G_m^n$ as above.
Let $\er := \End(\G) \cong  \End(A) \times \End(\G_m^n)$.
By taking a finite field extension if
necessary, we assume that each $\eta \in \er$
is defined over the number field $k_0$.

We define the Kummer pairings for $\G$ as follows: if $k \geq  k_0$, and $\gamma
\in \G(k)$ and $\sigma\in\Gal(k)$, let $\pairing{\sigma}{\gamma}_n :=
\sigma\alpha - \alpha \in \G[n]$ for any $\alpha \in \G(\bar{k})$ with
$n\alpha=\gamma$, and let $\pairing{\sigma}{\gamma} :=
(\pairing{\sigma}{\gamma}_n)_n \in T_\infty^{\G}$.

A {\em torsion coset} in $\G$ is the translate $H+\zeta$ of a connected algebraic
subgroup $H \leq  \G$ by a torsion point $\zeta \in \G[\infty]$.

By considering the torsion group, one sees that $T^{H}_\infty$ for such an $H$
is isomorphic to a finite power of $\Zhat$, and so a subgroup $Z$ of
$T^{H}_\infty$ is open in the profinite topology, $Z \leq _{\operatorname{op}} T^{H}_\infty$,
if and only if it is of finite index.

\begin{proposition} \label{prop:kummer}
    Let $\gamma \in \G(k_\infty)$. Suppose $H+\zeta$ is the minimal torsion
    coset containing $\gamma$.
    Then
    $Z_\infty := \pairing{\Gal(k_\infty)}{\gamma} \leq _{\operatorname{op}} T^{H}_\infty \leq 
    T^{\G}_\infty$.
\end{proposition}
\begin{remark}
    In the case that $\G$ is an abelian variety, this is exactly
    \cite[Theorem~5.2]{BertrandLuminy}.
\end{remark}
\begin{proof}
    Since $\pairing{\Gal(k_\infty)}{\zeta} = 0$, by shifting by $\zeta$ we may
    assume $\gamma \in H$.

    Replacing $k_0$ with $k_0(\gamma)$ if necessary, we may assume $\gamma \in
    \G(k_0)$.

    By \lemref{lem:subgroupsEndomorphisms} and the assumption that the
    endomorphisms are over $k_0$, we have that $H$ is defined over $k_0$.
    So since $H$ is divisible, $Z_\infty \leq  T^{H}_\infty$. It remains to see
    that the index is finite.

    Now $\G(k_\infty)$ is an $\er$-submodule of $\G(\Qbar)$ by
    the assumption that the endomorphisms are over $k_0 \leq  k_\infty$,
    and $\er\gamma$ is a finitely generated subgroup since $\er$ is finitely
    generated.
    So by \lemref{lem:boundedDiv}, say
    $m>0$ is such that $m\cdot\pureHull_{\G(k_\infty)}(\er\gamma) \leq  \er\gamma +
    \G[\infty]$.

    For $n>0$, let $Z_n := \pairing{\Gal(k_\infty)}{\gamma}_n \leq \G[n]$.
    Note that $Z_n$ is defined over $k_0$; indeed, if $\sigma \in \Gal(k_\infty)$ and
    $\tau \in \Gal(k_0)$, then
    $\sigma^{\tau^{-1}} \in \Gal(k_\infty)$ and
    \[ \pairing{\sigma^{\tau^{-1}}}{\gamma}_n = \tau\sigma\tau^{-1}\alpha
    - \alpha = \tau(\sigma(\tau^{-1}\alpha) - \tau^{-1}\alpha) =
    \tau\pairing{\sigma}{\gamma}_n \]
    (where $n\alpha = \gamma$, and hence $n\tau^{-1}\alpha=\gamma$).
    So $(Z_n)^\tau = Z_n$.

    So by \lemref{lem:faltings},
    the isogenous groups
    $B_n:=\quot{\G}{Z_n}$ fall into finitely many $k_0$-isomorphism classes.
    Therefore we may find $N$ such that for any $n$, there exists a $k_0$-isogeny
    $\theta_n : B_n \maps \G$ of degree $\deg\theta_n := \left|{\ker\theta_n}\right|$ dividing
    $N$.

    We conclude the proof of the Proposition by showing that for any $n$, the
    index $[H[n]:Z_n]$ divides $N \cdot \left|{\G[m]}\right|$.

    Indeed, let $\eta \in \er$ be the composition $\eta(x) :=
    \theta_n(\quot{x}{Z_n})$ of $\theta_n$ with the quotient map.
    Suppose $n\beta=\gamma$.
    Then $n\eta\beta=\eta\gamma$. But
    $\eta\beta$ is $\Gal(k_\infty)$-invariant;
    indeed, $Z_n \leq  \ker(\eta)$ and $\eta$ is defined over $k_0 \leq  k_\infty$,
    so \[ \sigma\eta\beta = \eta\sigma\beta =
    \eta(\beta + \pairing{\sigma}{\gamma}_n) = \eta\beta .\]
    So $\eta\beta \in \pureHull_{\G(k_\infty)}(\er\gamma)$,
    so $m\eta\beta \in \er\gamma + \G[\infty]$.
    So $m\eta\gamma \in n\er\gamma + \G[\infty]$,
    so $k(m\eta-n\eta')\gamma = 0$ for some $k>0$ and some $\eta' \in \er$.
    So by the choice of $H$, we have $m\eta = n\eta'$ on $H$.

    Hence $m\eta(H[n])=0$, i.e.\ $\theta_n(\quot{H[n]}{Z_n})\leq \G[m]$, and
    hence
    \begin{align*}
    [H[n]:{Z_n}]
    \quad&\divides\quad \left|{\ker\theta_n}\right| \cdot \left|{\G[m]}\right| \\
    &\divides\quad N \cdot \left|{\G[m]}\right|
    .\end{align*}
\end{proof}

\bibliography{mt,misc}
\end{document}